\newif\ifpreprint
\newcommand{\R}{\mathbb{R}} % reals
\newcommand{\N}{\mathbb{N}} % natural numbers {1, 2, ...}
\newcommand{\CC}{\mathbb{C}}
\newcommand{\setu}{\mathfrak{u}}
\newcommand{\setv}{\mathfrak{v}}
\newcommand{\setU}{\mathfrak{U}}
\newcommand{\calW}{\mathcal{W}}
\newcommand{\rd}{\,\mathrm{d}} % differential symbol for use in integrals (with thin space)
\newcommand{\rmd}{\mathrm{d}} % just the differential symbol (no thin space)
\newcommand{\bszero}{{\boldsymbol{0}}} % vector of zeros
\newcommand{\bsb}{{\boldsymbol{b}}}    % vector b
\newcommand{\bse}{{\boldsymbol{e}}}    % vector c
\newcommand{\bsx}{{\boldsymbol{x}}}    % vector x
\newcommand{\bsy}{{\boldsymbol{y}}}    % vector y
\newcommand{\bsz}{{\boldsymbol{z}}}    % vector z
\newcommand{\bsDelta}{{\boldsymbol{\Delta}}}    % vector \Delta
\newcommand{\bsalpha}{{\boldsymbol{\alpha}}}    % vector \alpha
\newcommand{\bsgamma}{{\boldsymbol{\gamma}}}    % vector \gamma
\newcommand{\bsnu}{{\boldsymbol{\nu}}}          % vector \nu
\newcommand{\bsomega}{\boldsymbol{\omega}}      % vector \omega
\newcommand{\bstau}{\boldsymbol{\tau}}			% vector \tau
\newcommand{\E}{\mathbb{E}} % expectation
\newcommand{\e}{\mathrm{e}} % Euler's 2.71828
\newcommand{\cost}{\mathrm{cost}}
\newcommand{\error}{\mathrm{error}} 
\DeclareMathOperator{\supp}{supp}
\DeclareMathAlphabet{\mymathbb}{U}{BOONDOX-ds}{m}{n}
\newcommand{\indicator}{\mymathbb{1}}
\newcommand{\essinf}{\mathop{\mathrm{ess\,inf}}}
\newcommand{\esssup}{\mathop{\mathrm{ess\,sup}}}
\def\citep#1#2{\cite[{#1}]{#2}}
\theoremstyle{plain}
  \newtheorem{theorem}{Theorem}
  \newtheorem{proposition}{Proposition}
  \newtheorem{lemma}{Lemma}
\theoremstyle{definition}
\theoremstyle{remark}
  \newtheorem{remark}{Remark}
\newcommand{\RefSec}[1]{Section~\textup{\ref{#1}}}
\newcommand{\RefThm}[1]{Theorem~\textup{\ref{#1}}}
\newcommand{\RefProp}[1]{Proposition~\textup{\ref{#1}}}
\newcommand{\RefLem}[1]{Lemma~\textup{\ref{#1}}}
\newcommand{\RefRem}[1]{Remark~\textup{\ref{#1}}}
\renewcommand*{\@textcolor}[3]{%
  \protect\leavevmode
  \begingroup
    \color#1{#2}#3%
  \endgroup
}
\begin{document}

\title{
MDFEM: \ifpreprint \else \\ \fi Multivariate decomposition finite element method for
elliptic PDEs with uniform random diffusion coefficients \ifpreprint \\ \fi
using higher-order QMC and FEM
}

\ifpreprint
\else
\titlerunning{MDFEM for uniform diffusion coefficients}
\fi

\ifpreprint
  \author[1]{Dong T.\,P. Nguyen}
  \author[2]{Dirk Nuyens}
  \affil[1]{dong.nguyen@hcmut.edu.vn, Faculty of Computer Science and Engineering, \authorcr Ho Chi Minh City University of Technology, VNU-HCM, Vietnam}
  \affil[2]{dirk.nuyens@cs.kuleuven.be, Department of Computer Science, \authorcr KU Leuven, Celestijnenlaan 200A box 2402, B-3001 Leuven, Belgium}
  \date{\today}
\else
  \author{Dong T.\,P. Nguyen \and
          Dirk Nuyens}
  \institute{%
  Dong T.\,P. Nguyen \at Faculty of Computer Science and Engineering, Ho Chi Minh City University of Technology, VNU-HCM, Vietnam \\ \email{dong.nguyen@hcmut.edu.vn}
  \and
  Dirk Nuyens \at Department of Computer Science, KU Leuven, Celestijnenlaan 200A  box 2402, B-3001 Leuven, Belgium \\ \email{dirk.nuyens@cs.kuleuven.be}
  }
  \date{\today}
\fi

\maketitle

\begin{abstract}
We introduce the \emph{multivariate decomposition finite element method} (MDFEM) for solving elliptic PDEs with uniform random diffusion coefficients.
We show that the MDFEM can be used to reduce the computational complexity of estimating the expected value of a linear functional of the solution of the PDE.
The proposed algorithm combines the multivariate decomposition method (MDM), to compute infinite-dimensional integrals, with the finite element method (FEM), to solve different instances of the PDE.
The strategy of the MDFEM is to decompose the infinite-dimensional problem into multiple finite-dimensional ones which lends itself to easier parallelization than to solve a single large dimensional problem.
Our first result adjusts the analysis of the multivariate decomposition method to incorporate the $(\ln(n))^d$-factor which typically appears in error bounds for $d$-dimensional $n$-point cubature formulae and we take care of the fact that $n$ needs to come, e.g., in powers of~$2$ for higher order approximations.
For the further analysis we specialize the cubature methods to be two types of quasi-Monte Carlo (QMC) rules, being digitally shifted polynomial lattice rules and interlaced polynomial lattice rules.
The second and main contribution then presents a bound on the error of the MDFEM and shows higher-order convergence w.r.t.\ the total computational cost in case of the interlaced polynomial lattice rules in combination with a higher-order finite element method.
We show that the cost to achieve an error $\epsilon$ is of order $\epsilon^{-a_{\mathrm{MDFEM}}}$ with $a_{\mathrm{MDFEM}} = 1/\lambda + d'/\tau$ if the QMC cubature errors can be bounded by $n^{-\lambda}$ and the FE approximations converge like $h^\tau$ with cost $h^{d'}$, where $\lambda = \tau (1-p^*) / (p^* (1+d'/\tau))$ and $p^*$ is a parameter representing the ``sparsity'' of the random field expansion.
A comparison with a dimension truncation algorithm shows that the MDFEM will perform better than the truncation algorithm if $p^*$ is sufficiently small, i.e., the representation of the random field is sufficiently sparse.
\ifpreprint
\else
\keywords{elliptic PDEs \and stochastic diffusion coefficient \and infinite-dimensional integration \and multivariate decomposition method \and finite element method \and higher-order quasi-Monte Carlo \and high dimensional quadrature/cubature \and complexity bounds}
\fi
\end{abstract}

\ifpreprint
\noindent{\bf Keywords:} elliptic PDEs, stochastic diffusion coefficient, infinite-dimensional integration, multivariate decomposition method, finite element method, higher-order quasi-Monte Carlo, high dimensional quadrature/cubature, complexity bounds.
\fi

\section{Problem setting}\label{sec:introduction}

In this paper we propose and theoretically analyze the application of the \emph{multivariate decomposition method} (MDM) combined with the \emph{finite element method} (FEM)  to a class of elliptic PDEs with random diffusion coefficients.
We call the fusion of the two techniques the \emph{multivariate decomposition finite element method} or MDFEM in short.
Particularly, we consider a parametric elliptic Dirichlet problem
\begin{align}\label{eq:PDE}
  -\nabla \cdot (a(\bsx, \bsy) \, \nabla u(\bsx, \bsy))
  &=
  f(\bsx)
  ,
  &&\text{for $\bsx \in D$,}
\end{align}
with zero boundary condition,
for a domain $D \subset \R^d$, where usually $d=1$, $2$ or $3$,
and the gradient operator $\nabla$ is taken with respect to $\bsx$.
The parametric diffusion coefficient $a$ is assumed to depend linearly on the parameters $y_j$ as follows
\begin{align*}
  a(\bsx, \bsy)
  &=
  a_0(\bsx) + \sum_{j \ge 1} y_j \, \phi_j(\bsx)
  ,
  &
  y_j
  &\in
  \Omega := \left[-\tfrac12,\tfrac12\right]
  ,
\end{align*} 
for $\bsx \in D$ and the parameter vector $\bsy$ is distributed with the uniform probability measure on~$\Omega^\N$.
Here, $a_0$ is the mean field of $a$ and the fluctuations $\{\phi_j\}_{j \ge 1}$ are given functions.

The \emph{weak form} of the PDE is to find for given $\bsy \in \Omega^\N$ the solution $u(\cdot, \bsy) \in V := H_0^1(D)$ such that
\begin{align}\label{eq:PDE-weak-form}
  \int_D a(\bsx, \bsy) \, \nabla u(\bsx, \bsy) \cdot \nabla v(\bsx) \rd \bsx 
  =
  \int_D f(\bsx) \, v(\bsx) \rd \bsx
  ,
  \qquad
  \forall v \in V
  .
\end{align}

Our goal is to compute the expected value w.r.t.\ the parameter vector $\bsy \in \Omega^\N$ of a functional of the solution $u$ of the PDE.
That is, given a bounded linear functional $G : V \to \R$, we wish to compute the
integral
\begin{align}\label{eq:expectation-G}
  \mathbb{E}[G(u)]
  =
  I(G(u))
  &=
  \int_{\Omega^\N}
  G(u(\cdot,\bsy)) \rd \mu(\bsy)
  ,
\end{align}
with $\rmd \mu(\bsy) := \bigotimes_{j \ge 1} \rmd y_j$. This is an infinite-dimensional integral. 

Infinite-dimensional integration has been studied in a number of recent papers, see, e.g., \cite{CDMR2009,DKS2013,DG2014,DG2014b,GKNW2018,GMR2014,GHHR2017,KSWW2010b,KNPSW2017,PW2011,HMNR2010,DG2016}. Three kinds of algorithms have been introduced: \emph{single-level}, \emph{multi-level} and MDM, which is based on the earlier changing dimension algorithm. For an overview of single-, multi-level and the changing dimension algorithms we refer to~\cite[Section~7]{DKS2013} and the references therein.
In this paper we will consider the recently developed MDM.
The crucial idea of the MDM algorithm is to decompose the infinite-variate function into an infinite summation of functions depending only on a finite number of variables. This infinite summation is then truncated into a summation over a finite, so-called  \emph{active set} (of sets), and the infinite-dimensional integral is then wrapped into the sum and replaced by a specialized cubature rule in each case. 
The active set and cubature rules are selected in order to achieve an approximation up to a requested error while minimizing the computational cost. Particularly, to decompose the functions we will use \emph{the anchored decomposition method}, see, e.g.,~\cite{KSWW2010}. We will show that the decomposed functions belong to an \emph{anchored reproducing kernel Hilbert space} for which there exist (higher-order) deterministic or randomized quasi Monte-Carlo rules that can be used as cubature rules in the MDM algorithm. 

In order to approximate the infinite-dimensional integral~\eqref{eq:expectation-G} it is necessary to approximate the solution $u$. We use a FEM for this approximation.
Therefore, a spatial discretization error is added to the total error and the computational cost is now more expensive including the cost of the FEM compared to just approximating an infinite-dimensional integral of a given function. Based on an a priori error bound, the parameters of the MDFEM are chosen in order to achieve a prescribed accuracy by minimizing the computational work.
We prove in~\RefThm{thm:main-theorem} a combined error bound for the MDFEM which achieves higher-oder convergence w.r.t.\ the total computational cost in case of higher-order QMC rules in combination with higher-order FEM methods.

In our analysis the anchored decomposition of $G(u(\bsx,\bsy))$ with respect to the parametric variables $\bsy$ belongs to an \emph{infinite-variate weighted anchored reproducing kernel Hilbert space}. We find that under the condition of pointwise summability of the sequence $\{|\phi_j| \}_{j\ge 1}$, i.e., forthcoming condition~\eqref{eq:kappa}, exploiting the regularity of the solution $u$ with respect to $\bsy$, the weights which appear in the MDM analysis are product weights. 

Under the assumption that the diffusion coefficient $a$ is bounded away from zero and infinity, uniformly in the
parameter $\bsy$, the Lax--Milgram lemma ensures the existence and uniqueness of the solution $u$ of the weak problem~\eqref{eq:PDE-weak-form} in $V$. This leads us to make the following conditions on $a_0$ and $\{\phi_j\}_{j \ge 1}$.
We require that 
\begin{align}\label{eq:a0-max-min}
  a_0 \in L^{\infty}(D) \quad \text{ and } \quad \essinf_{\bsx \in D} a_0(\bsx) > 0
  .
\end{align}
Furthermore, we require the existence of a real-valued sequence $\{b_j\}_{j\ge 1}$, with $0 < b_j \le 1$ for all~$j$, and a constant $\kappa \in (0,1)$ such that
\begin{align}\label{eq:kappa}
    \kappa
    :=
	\left\| \frac{\sum_{j \ge 1} |\phi_j|/b_j}{2 a_0} \right\|_{L^\infty(D)}
	<\quad
	\frac{1}{2\alpha+1}
	\quad<\quad
	1
	,
\end{align}
for some $\alpha \in \N$.
To state our main result, \RefThm{thm:main-theorem}, we further need
\begin{align}\label{eq:bj-pstar-summable}
	\{b_j\}_{j\ge 1} \in  \ell^{p^*}(\N)
	,
\end{align}
for some $p^* \in (0,1)$.
These assumptions are standard and a similar restriction on $\kappa$ to obtain higher-order convergence was, e.g., also used in~\cite{BCDM2017,HS2019,K2017}.

The condition~\eqref{eq:a0-max-min} provides two constants $0 < a_{0,\min} \le a_{0,\max} < \infty$ such that for a.e.\ $\bsx \in D$ 
\begin{align*}
  a_{0,\min} \le a_0(\bsx) \le a_{0,\max} 
  .
\end{align*}
This together with~\eqref{eq:kappa} implies that for a.e.\ $\bsx \in D$ and any $\bsy \in \Omega^\N = [-1/2,1/2]^\N$
\begin{align}\label{eq:a-min}
  a(\bsx, \bsy)
  \ge
  a_0(\bsx) -\frac1{2} \sum_{j \ge 1}|\phi_j(\bsx)| 
  \ge (1 - \kappa) \, a_0(\bsx)
  \ge (1-\kappa) \, a_{0,\min}
  >
  0
\end{align}
and
\begin{align*}
  a(\bsx, \bsy)
  \le
  a_0(\bsx) + \frac1{2} \sum_{j \ge 1}|\phi_j(\bsx)| 
  \le
  (1 + \kappa) \, a_0(\bsx)
  <
  (1+\kappa) \, a_{0,\max}
  <
  \infty
  .
\end{align*}
Thus, due to the Lax--Milgram lemma, for all $f \in V^*$ and any $\bsy \in \Omega^\N$ there exists a unique solution $u(\cdot, \bsy) \in V$ of the weak problem~\eqref{eq:PDE-weak-form} and this solution is uniformly bounded with respect to $\bsy$, see also \cite[Theorem~3.1]{KSS2012} and the references therein, that is, for any $\bsy \in \Omega^\N$ we have
\begin{align}\label{eq:apriori-bound-u}
  \|u(\cdot, \bsy)\|_V \le \frac{\|f\|_{V^*}}{(1 - \kappa)\, a_{0,\min}}
  .
\end{align}
The specific form of condition~\eqref{eq:kappa} was stated in~\cite{BCM2017} and widely considered in~\cite{BCDS2017,BCDM2017,GHS2018,GHS2018-MLQMC,K2017} to benefit from the possible local support of the basis functions $\{\phi_j \}_{j\ge 1}$. 
Let us illustrate this and assume for the moment that the $\{\phi_j\}_{j\ge 1}$ are a system of wavelets obtained by scaling and translation from a finite number of mother wavelets, as was considered in~\cite{KSS2015,BCM2017,BCDS2017,GHS2018,GHS2018-MLQMC,K2017}, i.e., 
\begin{align*}
	\bigl\{\phi_j\bigr\}_{j\ge 1}
	=
	\bigl\{ \phi_{\ell,k}: \ell \ge 1, k \in J_{\ell} \bigr\}
	,
\end{align*}
where $\ell$ indicates the scale level, $k$ indicates the location index
and $J_{\ell}$ denotes the set of all location indices at level $\ell$.
In what follows we now identify the index $j$ with the corresponding tuple $(\ell,k)$.
The diffusion coefficient is then represented in the form
\begin{align*}
	a(\bsx, \bsy) = a_0(\bsx) + \sum_{\ell \ge 1} \sum_{k \in J_{\ell}} y_{\ell,k} \, \phi_{\ell,k}(\bsx)
	.
\end{align*}
Under the reasonable assumption that the wavelet system has at most $\eta$ overlapping basis functions at each level $\ell$ we can choose the sequence $\{b_{\ell,k}\}$ explicitly as follows, for some $c_\delta > 0$,
\begin{align*}
  b_{\ell,k}
  &=
  c_\delta \, \|\phi_{\ell,k}\|_{L^\infty(D)} \, \ell^{1+\delta}
  ,
  \qquad
  \forall \delta > 0
  ,
\end{align*}
i.e., we can basically take the $b_{\ell,k}$ to be proportional to $\|\phi_{\ell,k}\|_{L^\infty(D)}$.
It then follows from the finite support and finite overlap of $\eta$ functions on each level that
\begin{align*}
  \left\| \frac{\sum_{j \ge 1} |\phi_j|/b_j}{2 a_0} \right\|_{L^\infty(D)} 
  &=
  \left\| \frac{\sum_{\ell \ge 1} \sum_{k \in J_\ell} |\phi_{\ell,k}|/b_{\ell,k}}{2 a_0} \right\|_{L^\infty(D)} 
  \\
  &\le
   \frac{\eta}{2c_\delta \,a_{0,\min}}
   \sum_{\ell \ge 1}  \ell^{-(1+\delta)}
   = 
   \frac{\eta\,\zeta(1+\delta)}{2c_\delta \,a_{0,\min}}
   ,
\end{align*}
where $\zeta(\cdot)$ is the Zeta function.
The constant $c_\delta$ can now be chosen to satisfy $\kappa < 1$, or $\kappa < 1/(2\,\alpha+1)$ in case of higher-order convergence.
If $\{\phi_j\}_{j\ge 1}$ are pointwise normalized such that for some positive constants $\sigma$ and $\hat \alpha$ 
\begin{align*}
  \|\phi_{\ell,k}\|_{L^{\infty}(D)}
  =
  \sigma 2^{- \hat\alpha \ell }
  ,
\end{align*} 
and  there exists a fixed ordering of the wavelets from coarser to finer scale, that is, there exists a bijective mapping $j : \N\times\N \to \N$ such that $j^{-1}(\ell_1,k_1) \le j^{-1}(\ell_2,k_2)$ for any $ 1\le \ell_1 \le \ell_2$, $k_1$ and $k_2$, then we have
\begin{align*}
  b_{\ell,k}
  =
  c_\delta \,\sigma \, \ell^{1+\delta} 2^{- \hat\alpha \ell }
  .
\end{align*}
Such ordering guarantees $b_j \lesssim j^{-\hat \alpha/d} (\ln(j))^{1+\delta}$ which implies $\{b_j\} \in \ell^{p^*}(\N)$ for any $p^* > d/\hat{\alpha}$.

Further, condition~\eqref{eq:kappa} is used to establish an estimation on the mixed derivatives of the solution $u$ with respect to the parameter $\bsy$, see~\RefProp{prop:sum-k-V-norm}. This estimation might follow from the result of~\cite{BCM2017,GHS2018}, but in this paper we provide a different proof strategy which is inspired by~\cite{BCDM2017,K2017}. The proposed proof is simpler because we avoid defining a so called \emph{auxiliary problem} as in~\cite[Proof of Theorem~3.1]{BCM2017} and~\cite[Section~4]{GHS2018}, and work directly on the given PDEs. However, in order to receive simpler weights in the selection of the MDFEM active set, see~\eqref{eq:MDFEM:active-set}, we impose the additional condition $\kappa < 1/(2\alpha+1)$ in~\RefLem{lem:sum-nu-V-norm}, a similar restriction on $\kappa$ was also used in~\cite{BCDM2017,HS2019,K2017}.
Our analysis delivers similar bounds as those of, e.g., \cite{BCM2017,GHS2018}, but specialized to our decomposed functions $u_\setu$ which appear in the MDM decompostion~\eqref{eq:MDM-u}, and in particular will allow us to choose very simple product weights with $\gamma_j = b_j$ in our infinite-variate norm~\eqref{eq:infinite-variate-p-norm}.

In~\cite{BCM2017,GHS2018} it is shown  that the locality of the system $\{\phi_j\}_{j\ge 1}$ plays an important role in the representation of the diffusion coefficient. Firstly,~\cite{BCM2017} shows that it leads to improve the convergence rate of best $n$-term approximation in the sense that, with the same decay of $\|\phi_j\|_{L_{\infty}(D)}$ as $j\to \infty$, representing the diffusion coefficient using a locally supported system $\{\phi_j\}_{j\ge 1}$ gives a convergence rate of one half order higher than when using a globally supported system.
Secondly, in~\cite{GHS2018} the locality of the system $\{\phi_j\}_{j\ge 1}$ leads to product weights in the analysis of the cubature rules, which in turn enables to reduce the computational cost of constructing good QMC cubature rules.
In contrast, the weights used to construct good QMC rules in, e.g., \cite{KSS2012,DKLNS2014}, are ``product and order dependent'' (POD) weights and incur a higher construction cost.
Note however, that we assume the construction of the cubature methods to be an a priori cost since our finite-variate function spaces are unweighted.

Let $\N := \{1,2,\ldots\}$ and $\N_0 := \{0, 1, 2, \ldots\}$.
We introduce some standard notations for the function spaces on the physical domain needed for the FEM error bounds in \RefSec{sec:FE-discretization}.
For any $m \in \N$, the classical Sobolev space $H^m(D) \subseteq L^2(D)$ consists of all functions having weak derivatives of order less than or equal to $m$ in $L^2(D)$,
\begin{align*}
  H^m(D)
  :=
  \bigl\{
   v
   :
   D \to \CC
   :
   \partial^{\bsomega}_\bsx v \in L^2(D) \text{ for all } \bsomega \in \N_0^d \text{ with }|\bsomega| \le m
  \bigr\}
\end{align*}
with $\partial^{\bsomega}_\bsx := \partial^{|\bsomega|}/\prod_{j=1}^d \partial^{\omega_j}_{x_j}$ and $|\bsomega| := \sum_{j=1}^d |\omega_j|$.
We identify $H^0(D)$ with $L^2(D)$.
Let $H_0^m(D)$ denote the Sobolev space with homogeneous boundary condition
\begin{align*}
  H_0^m(D)
  :=
  \bigl\{ v \in H^m(D) : v|_{\partial D} = 0 \bigr\}
\end{align*}
and norm 
\begin{align*}
  \|v\|_{H_0^m(D)}
  :=
  \left(
    \int_D \sum_{|\bsomega| = m} \left|\partial^{\bsomega}_\bsx v(\bsx) \right|^2 \rd \bsx
  \right)^{1/2}
  =
  \left(
    \sum_{|\bsomega|= m} \|\partial^{\bsomega}_\bsx v\|_{L^2(D)}^2
  \right)^{1/2}
.
\end{align*}
Note that this is a norm due to the boundary condition.
For $m = 1$ we define a separate symbol
\begin{align*}
  V
  :=
  H_0^1(D)
  :=
  \bigl\{ v \in H^1(D) : v|_{\partial D} = 0 \bigr\}
\end{align*}
with norm given by 
\begin{align}\label{eq:V-norm}
  \|v\|_V
  :=
  \|v\|_{H_0^1(D)}
  :=
  \left(\int_D \sum_{j=1}^d |\partial_{x_j} v(\bsx)|^2 \rd \bsx\right)^{1/2}
  =
  \|\nabla v\|_{L^2(D)}
  .
\end{align}
For any $r > 0$ with $r \notin \N$ we set $r= [r]+\{r\}$, with $[r]$ the integer part of $r$ and $\{r\}$ the fractional part of $r$, we define the Sobolev--Slobodeckij space $H^r(D)$ as the space of functions in $H^{[r]}(D)$ such that the following Slobodeckij semi-norm is finite
\begin{align*}
  |v|_{H^{\{r\}}(D)}
  :=
  \left(
    \int_D\int_D \frac{|v(\bsx)-v(\bsz)|^2}{|\bsx-\bsz|^{2\{r\}+d}} \rd \bsx \rd \bsz 
  \right)^{1/2}
  <
  \infty
  ,
\end{align*}
and the norm for $H^r(D)$ given by
\begin{align*}
  \|v\|_{H^r(D)}
  :=
  \left(\|v\|_{H^{[r]}(D)}^2+ |v|_{H^{\{r\}}(D)}^2\right)^{1/2}
  .
\end{align*}

The dual of $H_0^r(D)$ with respect to the pivot space $L^2(D)$ is denoted by $H^{-r}(D) := (H_0^r(D))^*$. Roughly speaking the duality pairing is the extension of the $L^2(D)$ inner product to $H^{-r}(D) \times H_0^r(D)$, see~\cite[Chapter~2.9]{TW2009}.

In a similar fashion, for any real non-negative $t$ we define another Sobolev space consisting of all functions having weak derivatives of order less than or equal to $t$ in $L^\infty(D)$
\begin{align*}
  W^{t,\infty}(D)
  :=
  \bigl\{
    v : D \to \CC : \partial^{\bsomega}_\bsx v \in L^\infty(D) \text{ for all } \bsomega \in \N_0^d \text{ with }|\bsomega| \le t
  \bigr\}
 .
\end{align*}
The norm is given by
\begin{align*}
  \|v\|_{W^{t,\infty}(D)}
  :=
  \max_{0 \le |\bsomega| \le t}
  \esssup_{\bsx \in D} \left|\partial^{\bsomega}_\bsx v(\bsx)\right|
  .
\end{align*}

The outline of the rest of this paper is as follows. In~\RefSec{sec:MDFEM-outline} we give the key ideas of the MDFEM and describe the basic steps in the MDFEM algorithm.
In~\RefSec{sec:MDM} we introduce the general MDM for approximating infinite-dimensional integrals with the selection of the active set and cubature rules. We refine the analysis of~\cite{KNPSW2017} and consider
a more flexible form of the convergence rate such that we can easily plug in higher-order QMC rules later which need the number of points to be a power of~$2$.
In~\RefSec{sec:QMC} we then introduce a higher-order anchored Sobolev space and specialize
the cubature rules to be (interlaced)
polynomial lattice rules that can achieve
higher-order convergence rates in the introduced space.
\RefSec{sec:parametric-regularity} considers the regularity of the solution $u$ with respect to the parametric variable~$\bsy$.
We obtain a bound on the norm of the functional $G$ of the solution which we need for the error analysis. 
Finally in~\RefSec{sec:error-analysis} we analyze the error of the MDFEM. Based on a priori error estimates, we select the active set, the cubature rules and the finite element meshsizes for the MDFEM.
We present our main result in this section, it is show in~\RefThm{thm:main-theorem} that the computational cost to achieve an accuracy of order $\epsilon$ is of order $\epsilon^{-a_{\mathrm{MDFEM}}}$ where $a_{\mathrm{MDFEM}} = 1/\lambda + d'/\tau$
if the QMC cubature errors can be bounded by $n^{-\lambda}$ and the FE approximations converge like $h^\tau$ with $\lambda = \tau (1-p^*) / (p^* (1+d'/\tau))$, and with $p^*$ representing the ``sparsity'' of the random field expansion through~\eqref{eq:kappa} and~\eqref{eq:bj-pstar-summable}.
By comparing with a single-level method we show that the multivariate decomposition method ideas can be used to reduce the computational complexity.
\RefSec{sec:conclusion} presents some concluding remarks.

In this paper $P \lesssim Q$ means there exists a constant $C$ independent of all relevant parameters such that $P \le C \, Q$.
Both the cardinality of a set and the $\ell^1$ norm of a vector are denoted by $|\cdot|$ but it should be clear from the context whichever is meant.
Througout we interpret $0^0$ as~$1$.

\section{Outline of the MDFEM}\label{sec:MDFEM-outline}

In this section we will first give some useful definitions and then introduce the main idea of the MDFEM. 
For any $\bsy \in \Omega^\N$ and $\setu \subset \N$, with $|\setu| < \infty$, we let $\bsy_\setu \in \Omega^\N$
denote the vector such that $(\bsy_\setu)_j = y_j$ for $j \in \setu$ and $0$ otherwise, and let $u(\cdot, \bsy_\setu)$ denote the ``\emph{$\setu$-truncated solution}'' of \eqref{eq:PDE} with $\bsy=\bsy_\setu$, that is, the solution of the problem:
\begin{align}\label{eq:u-truncated-solution}
  -\nabla \cdot ( a(\bsx, \bsy_\setu) \, \nabla u(\bsx, \bsy_\setu) )
  =
  f(\bsx)
  \text{ for $\bsx$ in $D$,}
  \;\,
  u(\bsx, \bsy_\setu)
  =
  0
  \text{ for $\bsx \in \partial D$}
,
\end{align}
where $a(\bsx, \bsy_\setu) = a_0(\bsx)+ \sum_{j \in \setu} y_j \, \phi_j(\bsx)$.
To approximate the solution to the variational form for any $\bsy_\setu$ we use the FEM. Let us define a finite dimensional subspace $V^h \subset V$, where the $h > 0$ is to be specified below, but it should be understood that $V^h \subset V^{h'} \subset V$ for $h' < h$.
We will solve the variational problem on $V^h$. The finite element approximation of the variational formulation of the $\setu$-truncated problem denoted by $u^h(\cdot, \bsy_\setu)$ is then to find for given $\bsy_\setu$ the solution $u^h(\cdot, \bsy_\setu) \in V^h$ such that 
\begin{align*}
  \int_D a(\bsx, \bsy_\setu) \, \nabla u^h(\bsx, \bsy_\setu) \cdot \nabla v(\bsx) \rd \bsx 
  =
  \int_D f(\bsx) \, v(\bsx) \rd \bsx
  ,
  \qquad
  \forall v \in V^h
  .
\end{align*}

The MDM strategy is to decompose the full solution $u$ in the form
\begin{align}\label{eq:MDM-u}
  u(\cdot, \bsy)
  =
  \sum_{|\setu| < \infty} u_\setu(\cdot, \bsy_\setu)
  ,
\end{align}
where the sum is over all finite subsets $\setu \subset \N$, and
\begin{align}\label{eq:uu}
  u_\setu(\cdot, \bsy_\setu)
  :=
  \sum_{\setv \subseteq \setu}(-1)^{|\setu|-|\setv|} \, u(\cdot, \bsy_\setv)
  .
\end{align}
We want to stress that $u_\setu(\cdot, \bsy_\setu)$ and $u(\cdot, \bsy_\setu)$ are different and we can only approximately evaluate $u(\cdot, \bsy_\setu)$ directly by the FEM.
Such decomposition of $u$ is called the \emph{anchored decomposition with anchor at $0$}, whose definition enforces that $u_\setu(\cdot, \bsy_\setu) =0$ whenever $y_j=0$ for any $j \in \setu$, see, e.g., \cite{KSWW2010,NN2021}.

Let $u_\setu^{h_\setu}(\cdot, \bsy_\setu)$ denote the finite element approximation of $u_\setu(\cdot, \bsy_\setu)$ obtained by  summing up the FEM approximations $u^{h_\setu}(\bsx, \bsy_\setv)$, i.e., 
\begin{align}\label{eq:uu-hu}
  u_\setu^{h_\setu}(\bsx, \bsy_\setu)
  :=
  \sum_{\setv \subseteq \setu}(-1)^{|\setu|-|\setv|} \, u^{h_\setu}(\bsx, \bsy_\setv)
  .
\end{align} 
Note that we use the same $h_\setu$ for all $\setv \subseteq \setu$ to approximate $u^{h_\setu}(\bsx,\bsy_\setv)$.

Due to the linearity and boundedness of $G$, we have
\begin{align}\label{eq:G-MDM}
  G(u(\bsx, \bsy))
  =
  \sum_{|\setu| < \infty} G(u_\setu(\bsx, \bsy_\setu))
  .
\end{align}
Let us define
\begin{align*}
  I_\setu(G(u_\setu)) := \int_{\Omega_\setu} G(u_\setu(\cdot, \bsy_\setu)) \rd\mu_\setu (\bsy_\setu)
\end{align*}
where $\Omega_\setu := \Omega^{|\setu|}$ and $\rmd\mu_\setu(\bsy_\setu) := \bigotimes_{j \in \setu}  \rmd\mu(y_j)$.
Under some assumptions, which will be specified in~\RefRem{rem:Gu-well-defined}, the decomposition~\eqref{eq:G-MDM} is well-defined, moreover, we can interchange integral and sum to obtain
\begin{align*}
  I(G(u))
  &=
  \int_{\Omega^\N}\sum_{|\setu| < \infty} G(u_\setu(\cdot, \bsy_\setu)) \rd \mu(\bsy)
  =
  \sum_{|\setu| < \infty} \int_{\Omega^\N} G(u_\setu(\cdot, \bsy_\setu))\rd \mu(\bsy)
  \\
  &=
  \sum_{|\setu| < \infty} \int_{\Omega_\setu} G(u_\setu(\cdot, \bsy_\setu)) \rd\mu_\setu (\bsy_\setu)
  =
  \sum_{|\setu| < \infty} I_\setu(G(u_\setu))
  .
\end{align*}

Given a desired error $\epsilon>0$, the MDFEM will decide which subsets $\setu \subset \N$ to include in the \emph{active set} $\setU(\epsilon)$ to approximate the infinite MDM sum.
Next, for each $\setu \in \setU(\epsilon)$ the integral of $G(u_\setu)$ needs to be approximated.
The integral is therefore replaced by a cubature formula using $|\setu|$-dimensional cubature nodes $\bsy_\setu^{(k)}$, and for each such node we use~\eqref{eq:uu-hu} to sum up the FEM approximations to obtain $u_\setu^{h_\setu}(\bsx,\bsy_\setu^{(k)})$.
More specifically, the MDFEM approximates~\eqref{eq:expectation-G} by 
\begin{align}\label{eq:MDFEM}
  Q_\epsilon^{\mathrm{MDFEM}}(G(u))
  :=
  \sum_{\setu \in \setU(\epsilon)} Q_{\setu,n_\setu}(G(u_\setu^{h_\setu}))
\end{align}
with
\begin{align*}
  Q_{\setu,n_\setu}(G(u_\setu^{h_\setu})) 
  :=
  \sum_{k=0}^{n_\setu-1} w_\setu^{(k)} \, G(u_\setu^{h_\setu}(\cdot, \bsy_\setu^{(k)}))
  ,
\end{align*}
where $\{(\bsy_\setu^{(k)}, w_\setu^{(k)})\}_{k=0}^{n_\setu-1}$ are the cubature nodes and their respective weights for the cubature rule $Q_{\setu,n_\setu}$.
For every $\setu \in \setU(\epsilon)$ the number of cubature nodes $n_\setu$ and the FEM meshsizes $h_\setu$ are chosen to minimize the computational cost of the algorithm. 

\section{General MDM setting: infinite-dimensional integration}\label{sec:MDM}

In this section we will introduce the MDM which is developed for computing integrals over an infinite-dimensional product region. We propose an improved error analysis in comparison to~\cite{KNPSW2017}. This allows us to consider integrals with respect to more general probability measures and apply higher-order quasi-Monte Carlo rules as cubature rules.
We consider  
\begin{align*}
  I(F)
  :=
  \int_{\Omega^\N} F(\bsy) \rd\mu(\bsy)
  ,
\end{align*}
where $\mu$ is the countable product of a one-dimensional probability measure over $\Omega$, that is, $\rmd \mu(\bsy) := \bigotimes_{j \ge 1} \mu(\rmd y_j)$.
A typical example is when $\Omega$ is bounded and $\mu$ is the uniform probability measure over $\Omega^\N$ as is the case in our problem setup where $\Omega = \left[-\frac{1}{2}, \frac{1}{2} \right]$.
Another example is when $\Omega = \R$ and $\mu$ is a Gaussian product measure over $\R^\N$, see also \cite{NN2021}.

The starting point of the MDM is that the integrand $F$ is given as a sum of finite-variate functions
\begin{align*}
  F(\bsy)
  =
  \sum_{|\setu| < \infty} F_\setu(\bsy_\setu)
  ,
\end{align*}
where the functions $F_\setu$ depend only on $\bsy_\setu$ and belong to some tangible function space.
In this paper each $F_\setu$ belongs to a reproducing kernel Hilbert space $H(K_\setu)$ with reproducing kernel $K_\setu$ and norm denoted by $\|\cdot\|_{H(K_\setu)}$. 
Further, $F$ belongs to the infinite-dimensional function space $H_{\bsgamma,p}$, for $1 \le p \le \infty$,
\begin{align}\label{eq:infinite-variate-p-norm}
  \|F\|_{H_{\bsgamma,p}} 
  :=
  \left( \sum_{|\setu| < \infty} \left( \gamma_\setu^{-1} \, \|F_\setu\|_{H(K_\setu)} \right)^p \right)^{1/p}
  ,
\end{align}
if this norm is finite, and we assume the standard $\sup$-definition when $p = \infty$.
We denote with $q \ge 1$ the H\"older-conjugate of $p$ such that $1/p + 1/q = 1$.
In \RefSec{sec:QMC} we will specialize the $H(K_\setu)$ spaces to be anchored Sobolev spaces for anchored functions and then the infinite-variate norm~\eqref{eq:infinite-variate-p-norm}, for $p=2$, is the limit of the standard $s$-dimensional anchored Sobolev space from the QMC literature for $s \to \infty$, see, e.g., \cite{DG2014,KSWW2010,NN2021}.
The positive numbers $\gamma_\setu$ are called weights and indicate the importance of the different subspaces.
In this paper we restrict ourselves to the case when each $H(K_\setu)$ is the $|\setu|$-fold tensor product of a one dimensional function space
\begin{align*}
  K_\setu(\bsx_\setu, \bsy_\setu)
  =
  \prod_{j \in \setu} K(x_j,y_j)
  ,
\end{align*}
where $K$ is a one dimensional reproducing kernel for which
there exists a constant $M$ such that
\begin{align}\label{eq:RKHS-M}
  M
  &:=
  \int_{\Omega} (K(y,y))^{1/2} \rd \mu(y)
  <
  \infty
  ,
  \\ \notag
  M_\setu
  &:=
  M^{|\setu|}
  =
  \int_{\Omega_\setu} (K_\setu(\bsy_\setu,\bsy_\setu))^{1/2} \rd \mu_\setu(\bsy_\setu)
  ,
\end{align}
with $\Omega_\setu = \Omega^{|\setu|}$ and $\rmd{\mu_\setu(\bsy_\setu)} = \bigotimes_{j \in \setu} \rmd{\mu(y_j)}$.
For our analysis it is sufficient that the weights $\gamma_\setu$ appearing in~\eqref{eq:infinite-variate-p-norm} are ``product weights'' given by
\begin{align*}
  \gamma_\setu
  :=
  \prod_{j \in \setu} \gamma_j
\end{align*}
for a positive sequence $\{\gamma_j\}_{j\ge1}$.
We define the product over the empty set to equal~$1$.
We also assume that there is a $p^* \in (0,q)$ such that
\begin{align*}
  \{\gamma_j\}_{j\ge 1} \in \ell^{p^*}(\N)
  .
\end{align*}
Note that smaller $p^*$ implies faster decay of the weight sequence $\{\gamma_j\}_{j\ge1}$ and implies a problem which depends less on higher dimensions, see also \RefRem{rem:Stechkin}.

The following result, which is modified from \cite[Lemma~10]{KNPSW2017}, will be used in the further part.

\begin{lemma}\label{lem:sum-finite}
	Let $\{\gamma_j\}_{j\ge 1}$ be a non-negative sequence such that $\{\gamma_j\}_{j\ge 1} \in \ell^{p^*}(\N)$ for some $p^* >0$. Then for any $T > 0$, $p_1 < 1$ and $p_2 \ge p^*$, it holds
	\begin{align*}
	\sum_{|\setu| < \infty}  |\setu|^{p_1 |\setu|} \, T^{|\setu|} \, \prod_{j\in \setu} \gamma_j^{p_2}< \infty
	.
	\end{align*} 
\end{lemma}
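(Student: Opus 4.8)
The plan is to organize the sum over finite subsets $\setu \subset \N$ according to their cardinality $\ell := |\setu|$ and to reduce the inner sum over all $\setu$ of a fixed size to an elementary symmetric polynomial, which can be bounded by a simple power of $\sum_{j} \gamma_j^{p_2}$. Concretely, I would first rewrite the series as
\[
\sum_{|\setu|<\infty} |\setu|^{p_1|\setu|} \, M^{|\setu|} \prod_{j\in\setu}\gamma_j^{p_2}
=
\sum_{\ell=0}^\infty \ell^{p_1\ell}\, M^\ell \sum_{\substack{\setu \subset \N \\ |\setu|=\ell}} \prod_{j\in\setu}\gamma_j^{p_2},
\]
where the $\ell=0$ term equals $1$ under the usual conventions $0^0=1$ and that the empty product is $1$. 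The inner sum is precisely the degree-$\ell$ elementary symmetric polynomial in the nonnegative quantities $\gamma_j^{p_2}$; expanding $\bigl(\sum_{j\geq1}\gamma_j^{p_2}\bigr)^\ell$ and noting that every distinct product $\prod_{j\in\setu}\gamma_j^{p_2}$ arises from exactly $\ell!$ orderings (while all remaining terms, having repeated indices, are nonnegative) yields
\[
\sum_{|\setu|=\ell}\prod_{j\in\setu}\gamma_j^{p_2} \leq \frac{1}{\ell!}\Bigl(\sum_{j\geq1}\gamma_j^{p_2}\Bigr)^\ell.
\]

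Next I would verify that $S := \sum_{j\geq1}\gamma_j^{p_2}$ is finite. Since $\{\gamma_j\}\in\ell^{p^*}(\N)$ we have $\gamma_j\to0$, so $\gamma_j\leq1$ for every index $j$ beyond some $J$; for those indices $\gamma_j^{p_2}\leq\gamma_j^{p^*}$ because $p_2\geq p^*$, and the remaining finitely many terms are trivially finite, so $S<\infty$.

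Combining the two bounds reduces the claim to the convergence of the single scalar series $\sum_{\ell\geq0} (MS)^\ell \, \ell^{p_1\ell}/\ell!$. Here lies the crux: the factor $\ell^{p_1\ell}$ grows faster than any geometric sequence, but it is tamed by the factorial coming from the symmetric-polynomial bound precisely because $p_1<1$. I would invoke Stirling's formula $\ell!\gtrsim (\ell/e)^\ell$ to bound the general term by a constant times $(MSe)^\ell\,\ell^{(p_1-1)\ell}$, and then take $\ell$-th roots to obtain $MSe\,\ell^{p_1-1}\to0$ as $\ell\to\infty$, since $p_1-1<0$; the root test then gives convergence.

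The main obstacle is exactly the interplay between the super-exponential factor $\ell^{p_1\ell}$ and the factorial $\ell!$: without the elementary-symmetric-polynomial bound (which is what produces the saving $1/\ell!$) the series would diverge, and the hypothesis $p_1<1$ is precisely what makes $\ell^{(p_1-1)\ell}$ decay fast enough to overwhelm the geometric growth $(MSe)^\ell$. The remaining points — finiteness of $S$ via $p_2\geq p^*$ together with $\ell^{p^*}$-summability, and the handling of the $\ell=0$ term — are routine.
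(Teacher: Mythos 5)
Your proposal is correct and follows essentially the same route as the paper: reorganizing by cardinality $\ell=|\setu|$, bounding the elementary symmetric polynomial by $\bigl(\sum_{j\geq1}\gamma_j^{p_2}\bigr)^\ell/\ell!$, and then establishing convergence of the resulting scalar series using $p_1<1$. The only cosmetic difference is that you conclude via Stirling's formula and the root test where the paper uses the ratio test, and you spell out the finiteness of $\sum_j\gamma_j^{p_2}$ (via $\gamma_j\to0$ and $p_2\geq p^*$) which the paper leaves implicit.
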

\begin{proof}
	We have
	\begin{align*}
	&\sum_{|\setu| < \infty} |\setu|^{p_1|\setu|} \, T^{|\setu|} \prod_{j \in \setu} \gamma_j^{p_2}
	=
	\sum_{\ell=0}^{\infty} \ell^{p_1\ell} \, T^\ell \sum_{|\setu|=\ell} \prod_{j \in \setu} \gamma_j^{p_2}
	\\
	&%\qquad
	=
		\sum_{\ell=0}^\infty \frac{\ell^{p_1 \ell} T^{\ell}}{\ell!}
		\sum_{j_1=1}^\infty  \gamma_{j_1}^{p_2} \!\! \sum_{j_1 \ne j_2=1}^\infty  \gamma_{j_2}^{p_2} \; \cdots\!\!\!\!\!\!\!\!\! \sum_{\{j_1,\ldots,j_{\ell-1}\} \not\ni j_\ell = 1}^\infty \!\!\! \gamma_{j_\ell}^{p_2}
	\le 
	\sum_{\ell=0}^{\infty} \frac{\ell^{p_1 \ell} T^{\ell}}{\ell!} \left(\sum_{j=1}^\infty \gamma_j^{p_2}\right)^{\!\ell}
	.
	\end{align*}
	The result follows from the ratio test: set $a_\ell = \ell^{p_1 \ell} T^\ell (\ell!)^{-1} (\sum_{j=1}^\infty \gamma_j^{p_2})^\ell$, then
	\begin{align*}
	\lim_{\ell \to \infty}\frac{a_{\ell+1}}{a_{\ell}} 
	= \lim_{\ell \to \infty} 
	\frac1{(\ell+1)^{1-p_1}} \left(1 + \frac1{\ell}\right)^{p_1 \ell} T \, \sum_{j=1}^\infty \gamma_j^{p_2}
	=
	0
	<
	1
	\end{align*}
	when $p_1 < 1$ and $p_2 \ge p^*$ such that $\sum_{j=1}^\infty \gamma_j^{p_2} < \infty$.
\end{proof}

We will now show that the infinite-dimensional integral can be written as a sum of the finite-dimensional integrals~$I_\setu$ on $H(K_\setu)$.
From~\eqref{eq:RKHS-M} we can deduce that the integration functional on $H(K_\setu)$ is bounded since for every $F_\setu \in H(K_\setu)$, using the reproducing property of $K_\setu$ and the Cauchy--Schwarz inequality, we have
\begin{align*}
  F_\setu(\bsy_\setu)
  &=
  \langle F_\setu, K_\setu(\bsy_\setu, \cdot) \rangle_{H(K_\setu)}
  \\
  &\le
  \|F_\setu\|_{H(K_\setu)} \, \|K_\setu(\bsy_\setu, \cdot)\|_{H(K_\setu)}
  =
  \|F_\setu\|_{H(K_\setu)} \, (K_\setu(\bsy_\setu, \bsy_\setu))^{1/2}
  ,
\end{align*}
from which it follows that
\begin{align}\label{eq:Iu-bounded}
  \notag
  I_\setu(F_\setu)
  &=
  \int_{\Omega_\setu} F_\setu(\bsy_\setu) \rd\mu_\setu(\bsy_\setu)
  \\
  &\le 
  \|F_\setu\|_{H(K_\setu)} \int_{\Omega_\setu} (K_\setu(\bsy_\setu, \bsy_\setu))^{1/2} \rd\mu_\setu(\bsy_\setu) 
  \notag
  \\
  &=
  \|F_\setu\|_{H(K_\setu)} 
  \prod_{j \in \setu} \int_{\Omega} (K(y_j,y_j))^{1/2} \rd \mu(y_j)
  \le
  \|F_\setu\|_{H(K_\setu)} \, M_\setu
  <
  \infty 
  .
\end{align}
We know that if
$
  \sum_{|\setu| < \infty} |I_\setu(F_\setu)|  
  <
  \infty
$
then by applying Fubini's theorem we can interchange integral and sum to obtain
\begin{align*}
  I(F)
  &=
  \int_{\Omega^\N} \sum_{|\setu| < \infty} F_\setu(\bsy_\setu) \rd \mu(\bsy) 
  =
  \sum_{|\setu| < \infty} \int_{\Omega^\N} F_\setu(\bsy_\setu) \rd \mu(\bsy) 
  \\
  &= 
  \sum_{|\setu| < \infty} \int_{\Omega_\setu} F_\setu(\bsy_\setu) \rd \mu_\setu(\bsy_\setu )
  =
  \sum_{|\setu| < \infty} I_\setu(F_\setu)
  .
\end{align*}
This means that we can separate $I(F)$ into the sum of finite-dimensional integrals.  Using~\eqref{eq:Iu-bounded} and H{\"o}lder's inequality we have
\begin{align*}
  I(F)
  =
  \sum_{|\setu| < \infty} |I_\setu(F_\setu)| 
  &\le
  \sum_{|\setu| < \infty} \|F_\setu\|_{H(K_\setu)} \, M_\setu 
  =
  \sum_{|\setu| < \infty} \gamma_\setu^{-1} \, \|F_\setu\|_{H(K_\setu)} \, \gamma_\setu \, M_\setu
  \notag
  \\
  &\le 
  \left( \sum_{|\setu| < \infty} \left( \gamma_\setu^{-1} \, \|F_\setu\|_{H(K_\setu)} \right)^p \right)^{1/p}
  \left( \sum_{|\setu| < \infty} \left(\gamma_\setu \, M_\setu\right)^q \right)^{1/q}
  \notag
  \\
  &=
  \|F\|_{H_{\bsgamma,p}} \left( \sum_{|\setu| < \infty} \left(\gamma_\setu \, M_\setu\right)^q \right)^{1/q}
  <
  \infty
  ,
\end{align*}
where the last factor can be bounded
for $1 \le q < \infty$ by applying \RefLem{lem:sum-finite} and using
$\{\gamma_j\}_{j\ge 1} \in \ell^{p^*}(\N) \subset \ell^{q}(\N)$.
For the case that $q=\infty$ we need that $\sup_{|\setu|<\infty} \prod_{j\in\setu} ( \gamma_j \, M ) < \infty$ which is also satisfied since $\gamma_j$ is $p^*$-summable.

For each subspace $H(K_\setu)$ we now need a cubature rule 
\begin{align}\label{eq:Qu}
  Q_{\setu,n_\setu} (F_\setu)
  := 
  \sum_{k=0}^{n_\setu-1} w_\setu^{(k)} \, F_\setu(\bsy_\setu^{(k)})
  ,
\end{align}
where $\{(\bsy_\setu^{(k)},w_\setu^{(k)})\}_{k=0}^{n_\setu-1}$ are the cubature nodes and their respective weights.
Without having specified the space yet, we will assume the cubature rule can achieve a convergence rate $\lambda$ for $F_\setu \in H(K_\setu)$, with $H(K_\setu)$ a function space with ``sufficient'' smoothness, to be specified later in \RefSec{sec:QMC}, in the form
\begin{align}\label{eq:Qu-rate}
	|I_\setu(F_\setu) - Q_{\setu,n_\setu} (F_\setu)| 
	\le 
	\|F_\setu\|_{H(K_\setu)} \, C_{\setu,\lambda} \, \max\left\{ 1, \frac{(\ln(n_\setu))^{\lambda_1 |\setu|}} {n_\setu^\lambda} \right\}
	,
\end{align}
where $C_{\setu,\lambda}$ is a positive constant that might depend on $\setu$ and $\lambda$ and the maximum is there for when $n_\setu$ is $0$ or~$1$.
Note that this is a typical error bound for QMC and sparse grid cubatures in a dominating mixed smoothness Sobolev function space, see, e.g., \cite{BG2004,KSS2011,DKS2013,DG2014,KNPSW2017,KN2016}, and references therein, where the rate~$\lambda$ can be stated independent of the number of dimensions.
We note that to get higher order convergence one normally will have to restrict the numbers $n_\setu$ to be e.g.\ powers of $2$, see~\cite{HKKN2012}. We will therefore take care to assure that $n_\setu$ is either $0$ or a power of~$2$.

The following result is our error bound for the MDM for infinite-dimensional integration and is slightly modified from~\cite[Section~4.1]{KN2016} to allow a wider class of cubature rules with convergence as in~\eqref{eq:Qu-rate}.
Note that we trade the $\ln(n_\setu)^{\lambda_1 |\setu|}$ from~\eqref{eq:Qu-rate} in the cubature error with a factor $|\setu|^{\lambda_1 |\setu|}$ in the combined cubature error~\eqref{eq:MDM-cubature-error} in the next proposition, which means that we will have to control the $(\ln(n_\setu)/|\setu|)^{\lambda_1 |\setu|}$ factor in our error bound~\eqref{eq:MDM-bound} later in \RefThm{thm:MDM}.

\begin{proposition}\label{prop:MDM-error}
  Let $F$ belong to the function space $H_{\bsgamma,p}$ with $1 \le p \le \infty$ and norm~\eqref{eq:infinite-variate-p-norm},
  and $\{\gamma_j\}_{j\ge 1} \in \ell^{p^*}(\N)$ for some $p^* \in (0,q]$ with $\frac1{p} + \frac1{q}=1$.
  If, for a given requested error tolerance $\epsilon > 0$, the active set $\setU(\epsilon)$ is selected such that
  \begin{align}\label{eq:MDM-truncation-error}
    \left( \sum_{\setu \notin \setU(\epsilon)} \left(\gamma_\setu \, M_\setu\right)^q \right)^{1/q}
    &\le 
    \frac{\epsilon}{2} 
    ,
  \end{align}
  and for all $\setu \in \setU(\epsilon)$ the numbers $n_\setu$ are chosen such that 
  \begin{align}\label{eq:MDM-cubature-error}
    \left( \sum_{\setu \in \setU(\epsilon)}  \left( \frac{\gamma_\setu \, C_{\setu,\lambda} \, |\setu|^{\lambda_1 |\setu|} }{\max\{1, n_\setu^\lambda\}} \right)^q \right)^{1/q}
    \le
    \frac{\epsilon}{2}
    ,
  \end{align}
  then it holds for the MDM algorithm
  \begin{align*}
    Q_\epsilon(F)
    :=
    \sum_{\setu \in \setU(\epsilon)} Q_{\setu,n_\setu} (F_\setu)
    =
    \sum_{\setu \in \setU(\epsilon)} \sum_{k=0}^{n_\setu-1} w_\setu^{(k)} F_\setu(\bsy_\setu^{(k)})
  \end{align*}
  based on cubature rules~\eqref{eq:Qu} with error bounds of the form~\eqref{eq:Qu-rate} that
  \begin{align}\label{eq:MDM-bound}
    |I(F) - Q_\epsilon(F)|
    &\le \epsilon \, \|F\|_{H_{\bsgamma,p}} 
      \max\left\{
        1,
        \max_{\setu \in \setU(\epsilon)}\left(\frac{\ln(n_\setu)}{|\setu|}\right)^{\lambda_1 |\setu|}
      \right\}
    .
  \end{align}	
\end{proposition}
\begin{proof}
The error of the MDM algorithm is split into two terms 
\begin{align*}
  |I(F) - Q_\epsilon(F)| 
  \le
  \sum_{\setu \notin \setU(\epsilon)} |I_\setu(F_\setu)| 
  +
  \sum_{\setu \in \setU(\epsilon)} |I_\setu(F_\setu) - Q_{\setu,n_\setu} (F_\setu)|
  .
\end{align*}
For the truncation error we obtain
\begin{align*}
  \sum_{\setu \notin \setU(\epsilon)} |I_\setu(F_\setu)| 
  &\le 
  \left( \sum_{\setu \notin \setU(\epsilon)} \left( \gamma_\setu^{-1} \, \|F_\setu\|_{H(K_\setu)} \right)^p \right)^{1/p}
  \left( \sum_{\setu \notin \setU(\epsilon)} \left(\gamma_\setu \, M_\setu\right)^q \right)^{1/q}
  \\
  &\le
  \|F\|_{H_{\bsgamma,p}} \left( \sum_{\setu \notin \setU(\epsilon)} \left(\gamma_\setu \, M_\setu\right)^q \right)^{1/q}
  .
\end{align*}
While for the cubature error we obtain
\begin{align*}
  &\sum_{\setu \in \setU(\epsilon)} |I_\setu(F_\setu) - Q_{\setu,n_\setu} (F_\setu)|
  \le 
  \sum_{\setu \in \setU(\epsilon)} \|F_\setu\|_{H(K_\setu)} \, C_{\setu,\lambda} \frac{\max\{1, (\ln(n_\setu))^{\lambda_1 |\setu|}\}}{\max\{1, n_\setu^\lambda\}}
  \\
  &\;\le
  \left( \sum_{\setu \in \setU(\epsilon)} \left( \gamma_\setu^{-1} \, \|F_\setu\|_{H(K_\setu)} \right)^p \right)^{1/p}
  \left( \sum_{\setu \in \setU(\epsilon)} \left( \gamma_\setu \, C_{\setu,\lambda} \frac{\max\{1, (\ln(n_\setu))^{\lambda_1 |\setu|}\}}{\max\{1, n_\setu^\lambda\}} \right)^q \right)^{1/q}
  \\
  &\;\le 
  \|F\|_{H_{\bsgamma,p}}
  \left( \max \left\{ 1, \max_{\setu \in \setU(\epsilon)} \left(\frac{\ln(n_\setu)}{|\setu|}\right)^{\lambda_1 |\setu|} \right\} \right)
  \left( \sum_{\setu \in \setU(\epsilon)} \left( \frac{\gamma_\setu \, C_{\setu,\lambda} \, |\setu|^{\lambda_1 |\setu|}}{\max\{1, n_\setu^\lambda\}} \right)^q \right)^{1/q}
  .
\end{align*}
Combining both parts, together with~\eqref{eq:MDM-truncation-error} and~\eqref{eq:MDM-cubature-error}, we obtain the claimed result.
\end{proof}

We define the cost of the MDM algorithm to be
\begin{align}\label{eq:MDM-total-cost}
  \cost(Q_\epsilon)
  :=
  \sum_{\setu \in \setU(\epsilon)} n_\setu \, \pounds_\setu
  ,
\end{align}
where $\pounds_\setu$ is the cost of evaluating $F_\setu(\bsy_\setu)$ for any $\bsy_\setu \in \Omega_\setu$.
In our setup, $\pounds_\setu$ will have to take into account that we obtain $F_\setu$, or, $u_\setu$, by the anchored decomposition, cf.~\eqref{eq:uu}, and we will estimate it by $2^{|\setu|} \, \$(|\setu|)$ with $\$(|\setu|)$ the cost of evaluating a $\setu$-truncated solution.
Note that we restrict our study to the case when $\pounds_\setu$ depends only on the cardinality of $\setu$.

\subsection{Selection of the active set}

For any $\rho \in (1, q/p^*]$ we define the active set as
\begin{align}\label{eq:active-set}
  \setU(\epsilon) =
  \setU(\epsilon, q, \rho)
  &:=
  \left\{
    \setu \subset \N
    :
    \left(\gamma_\setu \, M_\setu\right)^{(1-1/\rho)}
    >
    \frac{\epsilon/2}{\left( \sum_{|\setv| < \infty} (\gamma_\setv \, M_\setv)^{q/\rho} \right)^{1/q}}
  \right\}
  ,
\end{align}
with the $\sup$-definition for the norm on the right hand side of the inequality when $q=\infty$.
For $q < \infty$ we can use \RefLem{lem:sum-finite}, or direct calculation, to show that $\sum_{|\setv| < \infty} (\gamma_\setv \, M_\setv)^{q/\rho} < \infty$ for $\rho < q/p^*$. For $q=\infty$ we can allow arbitrarily large $\rho$. Taking $\rho=\infty$ seems natural for $q=\infty$ and then we have
\begin{align*}
  \setU(\epsilon, \infty, \infty)
  &=
  \big\{ \setu \subset \N : \gamma_\setu \, M_\setu > \epsilon / 2 \big\}
  .
\end{align*}

It can be easily verified that the definition of the active set assures that the truncation error~\eqref{eq:MDM-truncation-error} is bounded by~$\epsilon/2$.
The following proposition from~\cite[Theorem~2]{W2013} shows that the cardinality of the active set is polynomial in~$1/\epsilon$.

\begin{proposition}
	Given $\gamma_\setu = \prod_{j\in\setu} \gamma_j$ with $\{\gamma_j\} \in \ell^{p^*}(\N)$ for some $p^*\in (0,q)$, and with $M_\setu = M^{|\setu|}$, then
	for any $\epsilon > 0$, $\rho \in (1, q/p^*]$
	and $\setU(\epsilon, q, \rho)$ as defined in~\eqref{eq:active-set}, it holds for $1 \le q < \infty$ that
	\begin{align*}
	|\setU(\epsilon,q,\rho)| < \left(\frac{2}{\epsilon} \right)^{q/(\rho-1)} \left(\sum_{|\setu| < \infty} (\gamma_\setu \, M_\setu)^{q/\rho} \right)^{\rho /(\rho-1)}
	\lesssim 
	 \epsilon ^{-q/(\rho-1)} 
	.
	\end{align*} 
    For the case $q = \infty$ and with $\rho = \infty$, it holds 
	\begin{align*}
	|\setU(\epsilon,\infty,\infty)|
	<
	\left(\frac{2}{\epsilon}\right)^{p^*}
	\sum_{|\setu| < \infty} \left( \gamma_\setu \, M_\setu \right)^{p^*}
	\lesssim 
	\epsilon^{-p^*}
	.
	\end{align*}
\end{proposition}

\begin{remark}
\label{rem:choice-of-rho}
	For $q \ne \infty$ this proposition states that the cardinality of the active set is of order $\epsilon^{-q/(\rho-1)}$ so for a fixed $q$ the parameter $\rho$ should be chosen as large as possible, i.e., $\rho=q/p^*$, if the aim is to achieve the smallest active set.
	For $q = \infty$ this means taking $\rho = \infty$.
\end{remark}

The following result from~\cite[Lemma~1]{PW2011}, see also \cite{W2013}, asserts that the active set only consists of functions depending on a low number of variables. 

\begin{proposition}
\label{prop:max-dim}
  Given $\gamma_\setu = \prod_{j\in\setu} \gamma_j$ with $\{\gamma_j\} \in \ell^{p^*}(\N)$ for some $p^*\in (0,q)$, and $M_\setu = M^{|\setu|}$, then
  for any $\epsilon >0$, $\rho \in (1,q/p^*]$ and $\setU(\epsilon, q, \rho)$ defined in~\eqref{eq:active-set},
  it holds 
  \begin{align*}
    d(\epsilon,q,\rho)
    &:=
    \max_{\setu \in \setU(\epsilon,q,\rho)} |\setu| = O\left( \frac{\ln(\epsilon^{-1})}{\ln(\ln(\epsilon^{-1}))}\right) 
    =
    o(\ln(\epsilon^{-1}))
    ,
  \end{align*}
  as $\epsilon \to 0$.
\end{proposition}

\subsection{Selection of the MDM cubature rules}\label{sec:MDM:Lagrange}

The key idea of the MDM algorithm is to select cubature rules $Q_{\setu,n_\setu}$ for all $\setu \in \setU(\epsilon,q,\rho)$ such that the computational cost~\eqref{eq:MDM-total-cost} is minimized with respect to $n_\setu$ under the constraint~\eqref{eq:MDM-cubature-error}.
Instead of minimizing for $n_\setu$ directly we look for positive real numbers $k_\setu \in \R$ and then set
\begin{align}\label{eq:MDM:nu}
  n_\setu
  &=
  2^{\log_2(\lfloor k_\setu \rfloor)} \in \N_0
  .
\end{align}
This guarantees that our $n_\setu$ are either $0$ or a power of~$2$.
Note that with this choice $n_\setu \le k_\setu$, and hence
\begin{align}\label{eq:cost-bound-ku}
  \cost(Q_\epsilon)
  =
  \sum_{\setu \in \setU(\epsilon,q,\rho)} n_\setu \, \pounds_\setu
  &\;\le 
  \sum_{\setu \in \setU(\epsilon,q,\rho)} k_\setu \, \pounds_\setu
  .
\end{align}
At the same time we will use a constraint that is an upper bound on the actual error bound, which we will now show for the two cases $1 \le q < \infty$ and $q = \infty$.

For $1 \le q < \infty$ we look for positive real numbers $k_\setu \in \R$ which solve
\begin{align*}
  &\text{minimize } \sum_{\setu \in \setU(\epsilon,q,\rho)} k_\setu \, \pounds_\setu
  \\
  &\text{subject to }  \left( \sum_{\setu \in \setU(\epsilon,q,\rho)} \left( \frac{\gamma_\setu \, 2^\lambda \, C_{\setu,\lambda} \, |\setu|^{\lambda_1|\setu|}}{k_\setu^\lambda} \right)^q \right)^{1/q}
  =
  \frac{\epsilon}{2}
  .
\end{align*}
Our constraint is an upper bound in the following way
\begin{align*}
  \left( \sum_{\setu \in \setU(\epsilon,q,\rho)} \left( \frac{\gamma_\setu \, C_{\setu,\lambda} \, |\setu|^{\lambda_1 |\setu|}}{\max\{1, n_\setu^\lambda\}} \right)^q \right)^{1/q}
  &\le
  \left( \sum_{\setu \in \setU(\epsilon,q,\rho)} \left( \frac{\gamma_\setu \, 2^\lambda \, C_{\setu,\lambda} \, |\setu|^{\lambda_1 |\setu|}}{\max\{1, 2 \, n_\setu\}^\lambda} \right)^q \right)^{1/q}
  \\
  &\le
  \left( \sum_{\setu \in \setU(\epsilon,q,\rho)} \left( \frac{\gamma_\setu \, 2^\lambda \, C_{\setu,\lambda} \, |\setu|^{\lambda_1 |\setu|}}{k_\setu^\lambda} \right)^q \right)^{1/q}
  \le
  \frac{\epsilon}{2}
  ,
\end{align*}
since $k_\setu \le \max\{1, 2 \, n_\setu\}$.
This constrained minimization problem can be solved using the Lagrange multiplier method which leads to choose
\begin{align}\label{eq:ku-q-lt-infty}
  k_\setu
  =
  \left(\frac{\epsilon}{2}\right)^{-1/\lambda}
  \left(
    \frac{\gamma_\setu \, 2^\lambda \, C_{\setu,\lambda} \, |\setu|^{\lambda_1|\setu|}}{\pounds_\setu^{1/q}}
  \right)^{q/(q\lambda+1)}
  L_\epsilon^{1/(q\lambda)}
  ,
\end{align}
with
\begin{align}\label{eq:L-epsilon}
  L_\epsilon
  &:=
  \sum_{\setu \in \setU(\epsilon,q,\rho)}
      \pounds_\setu^{q\lambda /(q \lambda +1)} 
      \left( \gamma_\setu \, 2^\lambda \, C_{\setu,\lambda} \, |\setu|^{\lambda_1 |\setu|} \right)^{q/(q\lambda + 1)}
  .
\end{align}
For the special choice of $q = 1$ and $\lambda_1 = 0$ this agrees with the derivation in~\cite{KNPSW2017} with the modification that we here also guarantee that $n_\setu$ is a power of~$2$, which is needed to get higher order of convergence for our QMC rules, see~\cite{HKKN2012}.

For $q = \infty$ we demand for all $\setu \in \setU(\epsilon)$:
\begin{align*}
  \frac{\gamma_\setu \, C_{\setu,\lambda} \, |\setu|^{\lambda_1 |\setu|}}{\max\{1, n_\setu^\lambda\}}
  \le
  \frac{\gamma_\setu \, 2^\lambda \, C_{\setu,\lambda} \, |\setu|^{\lambda_1 |\setu|}}{\max\{1, 2 \, n_\setu\}^\lambda}
  \le
  \frac{\gamma_\setu \, 2^\lambda \, C_{\setu,\lambda} \, |\setu|^{\lambda_1 |\setu|}}{k_\setu^\lambda}
  \le
  \frac{\epsilon}{2}
  .
\end{align*}
Hence we choose
\begin{align}\label{eq:ku-q-eq-infty}
  k_\setu
  &=
  \left(\frac{\epsilon}{2}\right)^{-1/\lambda} \,
  \left( \gamma_\setu \, 2^\lambda \, C_{\setu,\lambda} \, |\setu|^{\lambda_1 |\setu|} \right)^{1/\lambda}
  .
\end{align}
Finally, combining the selection of the active set and cubature rules leads to our main  result on the convergence of the MDM for infinite-dimensional integration in the next theorem.

\begin{theorem}
\label{thm:MDM}
  Let $F$ belong to the function space $H_{\bsgamma,p}$ with $1 \le p \le \infty$ and norm~\eqref{eq:infinite-variate-p-norm},
  and $\{\gamma_j\}_{j\ge 1} \in \ell^{p^*}(\N)$ for some $p^* \in (0,q)$ with $\frac1{p}+ \frac1{q}=1$. If, for a given requested error tolerance $\epsilon>0$, the active set $\setU(\epsilon,q,\rho)$ is selected as in~\eqref{eq:active-set}
  for any $\rho \in (1, q/p^*]$, and if for all $\setu \in \setU(\epsilon,q,\rho)$ the numbers $n_\setu$ are chosen as in~\eqref{eq:MDM:nu}, then it holds for the MDM algorithm,
  \begin{align*}
    Q_\epsilon(F)
    =
    \sum_{\setu \in \setU(\epsilon,q,\rho)} Q_{\setu,n_\setu}(F_\setu)
    =
    \sum_{\setu \in \setU(\epsilon,q,\rho)} \sum_{k=0}^{n_\setu-1} w_\setu^{(k)} \, F_\setu(\bsy_\setu^{(k)})
  \end{align*}
  based on cubature rules~\eqref{eq:Qu} with convergence that can be expressed in the form~\eqref{eq:Qu-rate} with $\lambda \le 1/p^* - 1/q$ and $\lambda_1 < \lambda + 1/q$, and where
  $C_{\setu,\lambda}$ and $\pounds_\setu$ are at most exponential in $|\setu|$, that
  \begin{align*}
    |I(F) - Q_\epsilon(F)| 
    &\le 
    \epsilon \, \|F\|_{H_{\bsgamma,p}} 
    \max\left\{1,\max_{\setu \in \setU(\epsilon,q,\rho)} \left(\frac{\ln(n_\setu)}{|\setu|}\right)^{\lambda_1 |\setu|}  \right\}
    \\
    &=
    \|F\|_{H_{\bsgamma,p}} \, \epsilon^{1 -\delta(\epsilon)}
  \end{align*}
  where $\delta(\epsilon) = O(\ln(\ln(\ln(\epsilon^{-1})))/\ln(\ln(\epsilon^{-1}))) = o(1)$ as $\epsilon \to 0$.
  Furthermore, the computational cost is bounded by
  \begin{align*}
    \cost(Q_\epsilon) 
    &\lesssim
    \epsilon^{-1/\lambda}
    .
  \end{align*}
\end{theorem}
\begin{proof}
  We want to use that $n_\setu \le k_\setu \lesssim \epsilon^{-1/\lambda}$ with $k_\setu$ given by~\eqref{eq:ku-q-lt-infty} or~\eqref{eq:ku-q-eq-infty}. For $1 \le q < \infty$ we need to hence first show that $L_\epsilon$, given in~\eqref{eq:L-epsilon}, stays uniformly bounded when $\epsilon \to 0$. That is, we want
  \begin{align}\label{eq:L}
    L
    :=
    \lim_{\epsilon \to 0} L_\epsilon
    =
    \sum_{|\setu| < \infty}
      \pounds_\setu^{q\lambda /(q \lambda +1)} 
      \left( \gamma_\setu \, 2^\lambda \, C_{\setu,\lambda} \, |\setu|^{\lambda_1 |\setu|} \right)^{q/(q\lambda + 1)}
    <
    \infty
    .
  \end{align}
  We can use \RefLem{lem:sum-finite} to show that $L < \infty$ provided $\pounds_\setu$ and $C_{\setu,\lambda}$ are at most exponential in $|\setu|$, i.e., $\pounds_\setu^\lambda \, 2^\lambda \, C_{\setu,\lambda} \le T^{|\setu|}$ for some~$T$, and
	if the following conditions are satisfied:
	\begin{align*}
	  \lambda_1 q /(q\lambda+1) &< 1
	  &\text{and}&&
	  q/(q\lambda+1) &\ge p^*
	  .
	\end{align*}
	Using $b = a^{\ln(b)/ \ln(a)}$ and $\ln(n_\setu) \lesssim \ln(\epsilon^{-1/\lambda}) \lesssim \ln(\epsilon^{-1})$ we have
	\begin{align*}
	  \max_{\substack{\emptyset \ne \setu \in \setU(\epsilon,q,\rho) \\ \text{s.t.\ } n_\setu \ge 1}}
	  \left(\frac{\ln(n_\setu)}{|\setu|}\right)^{\lambda_1 |\setu|}
	  &=
	  \max_{\substack{\emptyset \ne \setu \in \setU(\epsilon,q,\rho) \\ \text{s.t.\ } n_\setu \ge 1}}
	  \epsilon^{- \lambda_1 \, (|\setu| / \ln(\epsilon^{-1})) \ln(\ln(n_\setu) / |\setu|)}
	  \\
	  &\le
	  \max_{\emptyset \ne \setu \in \setU(\epsilon,q,\rho)}
	  \epsilon^{- \lambda_1 \, (|\setu| / \ln(\epsilon^{-1})) \ln(\ln(\epsilon^{-1}) / |\setu|)}
	  .
	\end{align*}
	We now follow a similar reasoning as that in~\cite[page~513]{PW2011}.
	We look at the behaviour of the function $x \ln(1/x)$ with $x = |\setu| / \ln(\epsilon^{-1})$ which increases with $x$ for $0 < x \le \e^{-1}$.
	Since by \RefProp{prop:max-dim} we have $|\setu| \le d(\epsilon,q,\rho) \lesssim \ln(\epsilon^{-1})/\ln(\ln(\epsilon^{-1}))$, we can make $x = |\setu| / \ln(\epsilon^{-1}) \le d(\epsilon,q,\rho) / \ln(\epsilon^{-1}) \lesssim 1/\ln(\ln(\epsilon^{-1})) \le \e^{-1}$ by taking $\epsilon$ small enough.
	Hence we have for $\epsilon \to 0$
	\begin{align*}
	\frac{|\setu|}{\ln(\epsilon^{-1})}\ln\left(\frac{\ln(\epsilon^{-1})}{|\setu|}\right)
	\lesssim
	\frac{d(\epsilon,q,\rho)}{\ln(\epsilon^{-1})}\ln\left(\frac{\ln(\epsilon^{-1})}{d(\epsilon,q,\rho)}\right)
	\lesssim
	\frac{\ln(\ln(\ln(\epsilon^{-1})))}{\ln(\ln(\epsilon^{-1}))}
	.
	\end{align*}
	From here the first claim follows.
	
	Since $n_\setu \le k_\setu$ the computational cost~\eqref{eq:MDM-total-cost} can be bounded as in~\eqref{eq:cost-bound-ku}.
	Using~\eqref{eq:ku-q-lt-infty} for $1 \le q < \infty$ and~\eqref{eq:ku-q-eq-infty} for $q = \infty$
	we can now show $\cost(Q_\epsilon) \lesssim \epsilon^{-1/\lambda}$.
	For $1 \le q < \infty$ we use that $L$ as defined in~\eqref{eq:L} is bounded, while
	for $q = \infty$
    we have
	\begin{align*}
	  \cost(Q_\epsilon)
	  \le
	  \left(\frac{2}{\epsilon}\right)^{1/\lambda}
	  \sum_{\setu \in \setU(\epsilon,\infty,\rho)}
	    \pounds_\setu
	    \left(\gamma_\setu \, 2^\lambda \, C_{\setu,\lambda} \, |\setu|^{\lambda_1 |\setu|}\right)^{1/\lambda}
	  \lesssim
	  \epsilon^{-1/\lambda} 
	 ,
	\end{align*}
	and we can bound the sum for $\epsilon \to \infty$ similarly as $L_\epsilon$ above by making use of \RefLem{lem:sum-finite}, obtaining the conditions $\lambda_1 / \lambda < 1$ and $\lambda \le 1/p^*$.
\end{proof}

\begin{remark}
\label{rem:Stechkin}
We close this section with some remarks on~\RefThm{thm:MDM}. Since $\delta(\epsilon)= o(1)$ as $\epsilon \to 0$ this theorem implies that by using cubature rules with convergence rate higher than or equal to $\lambda = 1/p^* - 1/q$ we can achieve
\begin{align*}
  |I(F) - Q_\epsilon(F)|
  &\lesssim
  \cost(Q_\epsilon)^{-\lambda + o(1)}
  =
  \cost(Q_\epsilon)^{-a + o(1)}
  \qquad\text{with}\quad
  a = \frac{1}{p^{*}} - \frac{1}{q}
  .
\end{align*}
The convergence rate of the MDM is governed by the way we measure the norm of $F$ using the parameter $1 \le p \le \infty$ for the $p$-norm~\eqref{eq:infinite-variate-p-norm} with $1/p + 1/q = 1$, and the \emph{sparsity} of the sequence $\{\gamma_j\}_{j\ge1} \in \ell^{p^*}(\N)$ via the smallest possible parameter $p^* > 0$.
This concept of sparsity is what is used in the literature on best $n$-term approximation where Stechkin's lemma implies the same convergence rate of order $1/p^* - 1/q$, see, e.g., \cite{CDS2010,CDS2011}.
These results also match the exponent of tractability from the information-based complexity domain, see, e.g., \cite{W2013,DG2014}.
\end{remark}

\section{Function space and polynomial lattice rules}\label{sec:QMC}

In this section we introduce a reproducing kernel Hilbert space which is suitable for the MDM setting together with (interlaced) polynomial lattice rules which will be used as cubature rules in the MDM algorithm.
Here we will only provide some key results and we refer the reader to~\cite{DKPS2005,DG2014,DKLNS2014,NN2021} for more details.

Let $\alpha \ge 1$ be an integer. We first consider the one-dimensional reproducing kernel for an anchored Sobolev space of order~$\alpha$ of anchored functions with anchor at~$0$ over $\Omega = \left[-\frac1{2},\frac1{2}\right]$, see, e.g., \cite[Section~5]{DG2014}, \cite[Example~4.2]{KSWW2010} and \cite[Proposition~9]{NN2021} which amends the kernel as given in \cite{DG2014,KSWW2010},
\begin{align*}
    K_{\alpha,0}(x,y)
    &:=
    \sum_{r=1}^{\alpha-1} \frac{x^r}{r!}\frac{y^r}{r!}
    + \indicator_{\{xy > 0\}} \int_0^{\min\{|x|,|y|\}} \frac{(|x|-t)^{\alpha-1}}{(\alpha-1)!} \frac{(|y|-t)^{\alpha-1}}{(\alpha-1)!} \rd{t}
    .
\end{align*}
For $F, G \in H(K_{\alpha,0})$, seen as functions of $y \in \Omega$, the inner product of the corresponding reproducing kernel Hilbert space $H(K_{\alpha,0})$ is given by
\begin{align*}
  \langle F, G \rangle_{H(K_{\alpha,0})}
  :=
  \sum_{r=1}^{\alpha-1} (\partial^r_y F)(0) \, (\partial^r_y G)(0) + \int_{-\frac1{2}}^{\frac1{2}} (\partial^\alpha_y F)(y) \, (\partial^\alpha_y G)(y) \rd y
  ,
\end{align*}
with the norm $\|\cdot\|_{H(K_{\alpha,0})} := \sqrt{\langle \cdot, \cdot \rangle_{H(K_{\alpha,0})}}$. Note that all functions in the considered function space satisfy $F(0)=0$.

For multivariate functions, we assume that $F_\setu$ belongs to
$H(K_{\alpha,0,\setu})$ which is a tensor product space with the reproducing kernel defined by
\begin{align*}
  K_{\alpha,0,\setu}(\bsx _\setu, \bsy _\setu)
  :=
  \prod_{j\in \setu} K_{\alpha,0}(x_j, y_j)
  ,
\end{align*}
for $\bsx _\setu, \bsy _\setu \in \left[-\frac1{2},\frac1{2}\right]^{|\setu|}$. The corresponding norm is then given by 
\begin{align}\label{eq:norm-H-alpha-0-u}
  \|F_\setu\|_{H(K_{\alpha,0,\setu})}
  :=
  \left(
    \sum_{\setv \subseteq \setu}
    \sum_{\bstau_{\setu \setminus \setv} \in \{1:\alpha-1\}^{|\setu \setminus \setv|}}
    \int_{[-\frac1{2},\frac1{2}]^{|\setv|}} 
    \left|
      \left(\partial^{(\bsalpha_\setv,\bstau_{\setu \setminus \setv})}_{\bsy_\setu} F_\setu\right)(\bsy_\setv)
    \right|^2 \rd \bsy_\setv
  \right)^{1/2}
  ,
\end{align}
where $(\bsalpha_\setv,\bstau_{\setu \setminus \setv})$ denotes a combination of two sets, i.e., $\bsomega_\setu = (\bsalpha_\setv,\bstau_{\setu \setminus \setv})$ with $\omega_j = \alpha$ for $j \in \setv$ and $\omega_j = \tau_j$ for $j \in \setu \setminus \setv$.
Note that the derivative inside the norm is evaluated at $\bsy_\setv$ which means the argument takes the values of $y_j$ for $j \in \setv$ and $0$ otherwise, this is sometimes denoted by $[\bsy_\setv; \bszero]$ in other papers, but to not overload our notation in the next section we use this simplified form.
For notational convenience, when we introduce the Bochner norm~\eqref{eq:Bochner-norm} in the next section, we will denote $H(K_{\alpha,0,\setu})$ by $H_{\alpha,0,\setu}$.

For the function space $H(K_{\alpha,0})$ there exists a constant $M$ such that~\eqref{eq:RKHS-M} is satisfied. Indeed, we have
\begin{align}\label{eq:H-alpha-0:M}
  M
  =
  \int_{-1/2}^{1/2} (K_{\alpha,0}(y,y))^{1/2} \rd y 
  &\le
  \max_{x \in \left[-\frac1{2},\frac1{2}\right]} |K_{\alpha,0}(x,x)|^{1/2} 
  <
  \infty
  .
\end{align}

Next we need cubature rules which can provide higher order convergence for the anchored Sobolev space $H_{\alpha,0,\setu}$. Several choices exist in the literature and we state two of these methods in the next proposition. For $\alpha \ge 2$ we can obtain higher-order convergence by making use of interlaced polynomial lattice rules as in \cite{DKLNS2014} to obtain a convergence rate arbitrarily close to~$\alpha$. Since the function space in \cite{DKLNS2014} is different, we need an embedding result from \cite{DG2014} which we explain in the proof. For $\alpha = 1$ we resort to randomly digitally shifted polynomial lattice rules and achieve a convergence rate arbitrarily close to~$1$, see~\cite[Theorem~5.3]{DKPS2005}.
There exist software packages for using these cubature rules, see, e.g., \cite{MPS}, as well as for the construction of ``good generating vectors'' for such rules, see, e.g., \cite{QMC4PDE}.

\begin{theorem}\label{thm:QMC-convergence}
Let $F_\setu$ belong to the function space $H_{\alpha,0,\setu}$.
\begin{itemize}

\item When $\alpha = 1$ we can use randomly digitally shifted polynomial lattice rules in base~$2$, using $n_\setu = 2^{m_\setu}$ points, with $m_\setu \in \N$, such that the root-mean-square error over the digital shift is bounded as
\begin{align}\label{eq:RPLR}
  \sqrt{\E^{\bsDelta_\setu}\!\left[
    \left|I_\setu(F_\setu) - Q_{n_\setu}^{\bsDelta_\setu}(F_\setu)\right|^2
  \right]}
  &\le
  \frac{2^\lambda \, C_{1,\lambda}^{|\setu| \lambda}}{n_\setu^\lambda}
  \, \|F_\setu\|_{H_{1,0,\setu}}
  ,
  &&
  \forall \lambda \in [1/2,1)
  ,
\end{align}
and with the constant given by
\begin{align*}
  C_{1,\lambda}
  &:=
  \begin{cases*}
    \left(\frac{13}{12}\right)^{1/(2\lambda)} + \frac{1}{6}, & if $\lambda = \frac1{2}$, \\
    \left(\frac{13}{12}\right)^{1/(2\lambda)} + \frac1{3^{1/(2\lambda)} (2^{1/\lambda}-2)}, & if $\frac1{2} < \lambda < 1$.
  \end{cases*}
\end{align*}

\item When $\alpha \ge 2$ we can use interlaced polynomial lattice rules in base~$2$ with interlacing factor~$\alpha$, using $n_\setu = 2^{m_\setu}$ points, with $m_\setu \in \N$, such that the error is bounded as
\begin{align}\label{eq:IPLR}
  |I_\setu(F_\setu) - Q_{n_\setu}(F_\setu)|
  &\le
  \frac{4^\lambda\,C_{\alpha,\lambda}^{|\setu|\lambda}}{n_\setu^\lambda} \,
  \|F_\setu\|_{H_{\alpha,0,\setu}}
  ,
  &&
  \forall \lambda \in [1,\alpha)
  ,
\end{align}
with the constant given by
\begin{align*}
  C_{\alpha,\lambda}
  &:=
  2^{\alpha(\alpha-1)/2} \,
  \left( \alpha! \, \sqrt{\alpha} \, \left(\tfrac{3}{2}\right)^\alpha \left(\tfrac{5}{3}\right)^{\alpha-1} \right)
  \,
  \left(1 + \frac1{2^{\alpha/\lambda} - 2}\right)^\alpha
  .
\end{align*}

\end{itemize}
\end{theorem}
\begin{proof}
The result for $\alpha = 1$ can be found in~\cite[Theorem~5.3]{DKPS2005}. The result in that paper is for an anchored Sobolev space where the functions are not necessarily anchored as in our setup, but from \cite[third property of Lemma~1]{NN2021} follows that the norm~\eqref{eq:norm-H-alpha-0-u} could be written exactly like the norm of the space from \cite{DKPS2005} without changing its value.

For $\alpha \ge 2$ we make use of the continuous embedding of $H_{\alpha,0,\setu}$ into a function space based on Walsh functions $\calW_{\alpha,\setu}$, see, e.g., \cite{DG2014}.
Specifically, we have the following bound from \cite[Corollary~3]{DG2014}:
\begin{align*}
  \|f\|_{\calW_{\alpha,\setu}}
  &\le
  \left( \alpha! \, \sqrt{\alpha} \, \left(\tfrac{3}{2}\right)^\alpha \left(\tfrac{5}{3}\right)^{\alpha-1} \right)^{|\setu|} \, \|f\|_{H_{\alpha,0,\setu}}
  .
\end{align*}
Next we note that, although the result in \cite{DKLNS2014} is for a certain unanchored Sobolev space, and here we have an anchored Sobolev space, the construction of the interlaced polynomial lattice rules in \cite{DKLNS2014} happens for the space $\calW_{\alpha,\setu}$.
Hence the component-by-component construction error bound from \cite{DKLNS2014} for interlaced polynomial lattice rules also holds in our case, multiplied with the embedding constant.
Since our spaces are unweighted we can set all weights to~$1$, similar to what was done  in~\cite{NN2021} for the adaptation to a (different from the one in \cite{DKLNS2014}) unweighted unanchored Sobolev space.
From here the result follows.
\end{proof}

We end this section by noting that good polynomial lattice rules and interlaced polynomial lattice rules can be constructed by the fast component-by-component algorithm with a cost of $O(\alpha |\setu| n \ln(n))$, see \cite{NC2006b} and~\cite{DKLNS2014,NN2021}.
There are also similar error bounds for so-called higher-order polynomial lattice rules, but their construction cost is the much higher $O(\alpha |\setu| n_\setu^\alpha \ln(n_\setu))$, see~\cite{BDLNP2012}.
Since our spaces are unweighted, the constructed rules can be stored and used for any MDM/MDFEM algorithm.

\section{Parametric regularity of the PDE solution}\label{sec:parametric-regularity}

In this section we discuss bounds on derivatives with respect to the parametric variables $\bsy$ of the solution~$u$.
This is a key ingredient to show how the ``sparsity'' of the random field can be used to determine the regularity of the integrand function.

We first define the space $H_{\alpha,0,\setu}(\Omega_\setu;V)$ which is the Bochner version of the space $H_{\alpha,0,\setu}$ with norm~\eqref{eq:norm-H-alpha-0-u} and the $V$-norm~\eqref{eq:V-norm}, with the norm
\begin{align}\label{eq:Bochner-norm}
  \|u_\setu\|_{H_{\alpha,0,\setu}(\Omega_\setu;V)}
  &:=
  \left(
    \sum_{\setv \subseteq \setu}
    \sum_{\bstau_{\setu \setminus \setv} \in \{1:\alpha-1\}^{|\setu \setminus \setv|}}
    \int_{\Omega_\setv}
      \left\| 
        \left(\partial^{(\bsalpha_\setv,\bstau_{\setu \setminus \setv})}_{\bsy_\setu} u_\setu\right)(\cdot,\bsy_\setv)
      \right\|_V^2
    \rd \mu_\setv(\bsy_\setv)
  \right)^{1/2}
  ,
\end{align}
where inside the integral we evaluate $\partial^{(\bsalpha_\setv,\bstau_{\setu \setminus \setv})}_{\bsy_\setu} u_\setu$ in points $\bsy_\setv \in \Omega^\N$ with $\setv \subseteq \setu$, i.e., where $y_j = 0$ for $j \notin \setv$.
We show that there exists a bound for this norm  depending on the sequence $\{b_j\}_{j\ge 1}$ given in~\eqref{eq:kappa}. Thanks to that result we then obtain bounds for $\|G(u_\setu)\|_{H_{\alpha,0,\setu}}$ and $\|G(u_\setu^{h_\setu})\|_{H_{\alpha,0,\setu}}$ which will be used in the analysis of the MDFEM algorithm.
We now show how the norm of $u_\setu$ can be written in terms of the $\setu$-truncated solution by simply substituting $u(\cdot,\cdot_\setu)$ into~\eqref{eq:Bochner-norm}.

\begin{lemma}\label{lem:norm-of-uu-to-u}
	For any $\alpha \in \N$ and any $\setu \subset \N$  it holds
\begin{align*}
&\|u_\setu\|_{H_{\alpha,0,\setu}(\Omega_\setu;V)}
=
\|u(\cdot,\cdot_\setu)\|_{H_{\alpha,0,\setu}(\Omega_\setu;V)}
.
\end{align*}	
\end{lemma}
\begin{proof}
Using~\eqref{eq:uu} we have for $\setu \ne \emptyset$ and any $\bsomega_\setu \in \N^{|\setu|}$ and any $\bsy_\setu \in \Omega_\setu$
		\begin{align*}
		\left(\partial^{\bsomega_\setu}_{\bsy_\setu} u_\setu\right)(\cdot, \bsy_\setu) 
		=  
		\sum_{\setv \subseteq \setu}(-1)^{|\setu|-|\setv|} \left(\partial^{\bsomega_\setu}_{\bsy_\setu} u(\cdot, \cdot_\setv) \right)(\cdot, \bsy_\setu) 
		=
		\left(\partial^{\bsomega_\setu}_{\bsy_\setu} u(\cdot, \cdot_\setu) \right)(\cdot, \bsy_\setu)
		,
		\end{align*}
		where  we use the fact that  the partial derivative $\left(\partial^{\bsomega_\setu}_{\bsy_\setu} u(\cdot, \cdot_\setv) \right)(\cdot, \bsy_\setu) =0$ for all $\setv \subset \setu$, thus, the only surviving term is when $\setv = \setu$.
Using this together with~\eqref{eq:Bochner-norm} the result follows.
\end{proof}

To simplify further notation, for a given $\bsy \in \Omega^\N$, we introduce the energy norm
\begin{align*}
  \|v\|_{V,a_\bsy}
  &:=
  \sqrt{\int_D a(\bsx,\bsy) \, |\nabla v(\bsx)|^2 \rd \bsx}
  .
\end{align*}
Applying~\eqref{eq:a-min} it is easy to see that
\begin{align}\label{eq:lb-Va-norm}
  \sqrt{(1-\kappa) \, a_{0,\min}} \,\, \|v\|_V
  \le
  \|v\|_{V,a_\bsy}
  .
\end{align}
If we take $v(\bsx) = u(\bsx,\bsy)$ in~\eqref{eq:PDE-weak-form} and use the Cauchy--Schwarz inequality for the duality pairing, then we obtain
\begin{align*}
  \|u(\cdot, \bsy)\|_{V,a_\bsy}^2 = \int_D a(\bsx, \bsy) \, |\nabla u(\bsx, \bsy)|^2 \rd \bsx 
  =
  \int_D f(\bsx) \, u (\bsx, \bsy) \rd \bsx
  \le
  \|f\|_{V^*} \|u(\cdot, \bsy)\|_V
  .
\end{align*} 
Therefore, applying~\eqref{eq:lb-Va-norm} yields
\begin{align}\label{eq:ub-Va-norm}
  \|u(\cdot, \bsy)\|_{V,a_\bsy} 
  \le
  \frac{\|f\|_{V^*}}{\sqrt{(1 - \kappa)\,  a_{0,\min}}}
  .
\end{align}

For the next result we combine ideas from~\cite{BCDM2017,K2017}. Compared to \cite{BCM2017,GHS2018} we prove the regularity result directly without resorting to an auxiliary problem. The final result in \RefLem{lem:norm-uu-Guu} is similar to those in \cite{BCM2017,GHS2018}, but our results are specifically for $u_\setu$ and $G(u_\setu)$ and can be stated in a slightly simpler form. When we combine these bounds with the error analysis for the MDFEM in \RefSec{sec:error-analysis} they will lead to the particular simple choice of product weights with $\gamma_j = b_j$ for our infinite-variate function space.

\begin{proposition}\label{prop:sum-k-V-norm}
	Let $a_0 \in L^\infty(D)$ be such that $\essinf a_0 >0$, and there exists a sequence $\{b_j\}_{j\ge 1}$ with $0< b_j \le 1$ for all $j$, and a positive constant $\kappa\in (0,1)$ such that
	\begin{align*}
	\kappa
	=
	\left\| \frac{\sum_{j \ge 1} |\phi_j|/b_j}{2 \, a_0} \right\|_{L^\infty(D)}
	<1
	.
	\end{align*}  
	Then for any $\alpha \in \N$, $f \in V^*$, $\bsy\in \Omega^\N$ and any $k \in \N_0$  it holds
	\begin{align*}
	\sum_{\substack{|\bsnu|=k \\ \nu_j \le \alpha}}
	\bsb^{-2\bsnu}
	\|(\partial^\bsnu_\bsy u)(\cdot, \bsy) \|_V^2
	\le
	\left(
	\frac{2 \, \alpha\,\kappa}{1 - \kappa}
	\right)^{2k}
	\frac{\|f\|_{V^*}^2}{(1 -  \kappa)^2\, a_{0,\min}^2}
	,
	\end{align*}
where the sum is over $\bsnu \in \N_0^\N$ having only a finite number of nonzero indices, and we define $\bsb^\bsnu := \prod_{j \ge 1}b_j^{\nu_j}$.
\end{proposition}
\begin{proof} 
	We prove this result by induction on $\bsnu$. For $\bsnu = \bszero$ this is~\eqref{eq:apriori-bound-u}.	
	For $|\bsnu| \ge 1$ it is well-known that for any $\bsy \in \Omega^\N$, see, e.g.,~\cite{CDS2010,KN2016}, 
	\begin{align}\label{eq:Leibniz}
	\|(\partial^\bsnu_\bsy u)(\cdot, \bsy) \|_{V,a_\bsy}^2 
		=
		-
		\sum_{j \in \supp(\bsnu)} \nu_j \int_D \phi_j(\bsx) 
		\nabla (\partial^{\bsnu-\bse_j}_\bsy u)(\bsx,\bsy)
			\nabla (\partial^\bsnu_\bsy u)(\bsx,\bsy)
			\rd \bsx
			.
	\end{align}
Using~\eqref{eq:Leibniz} and then applying the Cauchy--Schwarz inequality to the sum over $j$ we get
	\begin{align*}
	&\sum_{\substack{|\bsnu|=k \\ \nu_j \le \alpha}} \bsb^{-2\bsnu} \|(\partial^\bsnu_\bsy u)(\cdot, \bsy) \|_{V,a_\bsy}^2 
		\\
	&\qquad=
	- 
	\int_D
	\sum_{\substack{|\bsnu|=k \\ \nu_j \le \alpha}}
	\sum_{j \in \supp(\bsnu)} 
	\bsb^{-\bse_j}\bsb^{-(\bsnu-\bse_j)} \bsb^{-\bsnu}
	\nu_j  \, \phi_j(\bsx) 
	\nabla (\partial^{\bsnu-\bse_j}_\bsy u)(\bsx,\bsy)
	\nabla (\partial^\bsnu_\bsy u)(\bsx,\bsy)
	\rd \bsx
	\\
	&\qquad\le
	\int_D
	\sum_{\substack{|\bsnu|=k \\ \nu_j \le \alpha}}
	\left(
	\sum_{j \in \supp(\bsnu)} 
		\bsb^{-\bse_j} 
		\nu_j  |\phi_j(\bsx)|   \left| \bsb^{-(\bsnu-\bse_j)} \nabla (\partial^{\bsnu-\bse_j}_\bsy u)(\bsx,\bsy)\right|^2
	\right)^{1/2}
	\\
	&
	\qquad\qquad\qquad
	\times
	\left(
	\sum_{j \in \supp(\bsnu)} 
	\bsb^{-\bse_j} \nu_j  |\phi_j(\bsx)|   \left| \bsb^{-\bsnu} \nabla (\partial^\bsnu_\bsy u)(\bsx,\bsy)\right|^2
	\right)^{1/2}
	\rd \bsx
	.
	\end{align*}
Again applying the Cauchy--Schwarz inequality to the sum over $\bsnu$ and to the integral over $D$ we have
	\begin{align}\label{eq:bound-sum-Va-norm}
	&\sum_{\substack{|\bsnu|=k \\ \nu_j \le \alpha}}
	\bsb^{-2\bsnu}
	\|(\partial^\bsnu_\bsy u)(\cdot, \bsy) \|_{V,a_\bsy}^2 
	\notag
	\\
	&\qquad\le
	\left(
	\int_D	
	\sum_{\substack{|\bsnu|=k \\ \nu_j \le \alpha}}
	\sum_{j \in \supp(\bsnu)} 
	\bsb^{-\bse_j} 
	\nu_j  |\phi_j(\bsx)|   \left| \bsb^{-(\bsnu-\bse_j)} \nabla (\partial^{\bsnu-\bse_j}_\bsy u)(\bsx,\bsy)\right|^2 \rd \bsx
	\right)^{1/2}
	\notag
	\\
	&
	\qquad\qquad\times
	\left(
	\int_D
	\sum_{\substack{|\bsnu|=k \\ \nu_j \le \alpha}}
	\sum_{j \in \supp(\bsnu)} 
	\bsb^{-\bse_j} \nu_j  |\phi_j(\bsx)|   \left| \bsb^{-\bsnu} \nabla (\partial^\bsnu_\bsy u)(\bsx,\bsy)\right|^2
		\rd \bsx
	\right)^{1/2}
	.
	\end{align}
Due to the fact that for $A_j \ge 0$ and $B_\bsnu \ge 0$
\begin{align*}
  \sum_{\substack{|\bsnu|=k \\ \nu_j \le \alpha}} \, \sum_{j \in \supp(\bsnu)} A_j B_{\bsnu - \bse_j}
  \le
  \sum_{\substack{|\bsnu|=k-1 \\ \nu_j \le \alpha}}\sum_{j \ge 1} A_j B_\bsnu
  =
  \left( \sum_{\substack{j \ge 1 \\ \vphantom{\le}}} A_j \right)
  \left( \sum_{\substack{|\bsnu|=k-1 \\ \nu_j \le \alpha}} B_\bsnu \right)
  ,
\end{align*}
which is equality without the condition $\nu_j \le \alpha$,
we write for the first factor in~\eqref{eq:bound-sum-Va-norm},
\begin{align}\label{eq:bound-Va-1}
&
\int_D
\sum_{\substack{|\bsnu|=k \\ \nu_j \le \alpha}}\,
\sum_{j \in \supp(\bsnu)} 
\bsb^{-\bse_j} \nu_j  |\phi_j(\bsx)|  
 \left| \bsb^{-(\bsnu-\bse_j)} \nabla (\partial^{\bsnu-\bse_j}_\bsy u)(\bsx,\bsy)\right|^2
\rd \bsx
\notag
 \\
&\qquad\le
\alpha
\int_D
\sum_{\substack{|\bsnu|=k \\ \nu_j \le \alpha}}\,
\sum_{j \in \supp(\bsnu)} 
\bsb^{-\bse_j} |\phi_j(\bsx)|  
\left| \bsb^{-(\bsnu-\bse_j)} \nabla (\partial^{\bsnu-\bse_j}_\bsy u)(\bsx,\bsy)\right|^2
\rd \bsx
\notag
\\
&\qquad\le
\alpha
\int_D
\sum_{j \ge 1} \bsb^{-\bse_j}  |\phi_j(\bsx)|  
\sum_{\substack{|\bsnu|=k-1 \\ \nu_j \le \alpha}} \left| \bsb^{-\bsnu} \nabla (\partial^\bsnu_\bsy u)(\bsx,\bsy)\right|^2
\rd \bsx
\notag
\\
&\qquad\le
\alpha
\left\|
\sum_{j \ge 1} 
\frac{ |\phi_j|/b_j}{a(\cdot,\bsy)}
\right\|_{L^\infty(D)}
\sum_{\substack{|\bsnu|=k-1 \\ \nu_j \le \alpha}}  \bsb^{-2\bsnu} \int_D a(\bsx, \bsy) \left| \nabla (\partial^\bsnu_\bsy u)(\bsx,\bsy)\right|^2
\rd \bsx
\notag
\\
&\qquad=
\alpha
\left\|
\sum_{j \ge 1} 
\frac{  |\phi_j|/b_j}{a(\cdot,\bsy)}
\right\|_{L^\infty(D)}
\sum_{\substack{|\bsnu|=k-1 \\ \nu_j \le \alpha}}  \bsb^{-2\bsnu} 
\|(\partial^\bsnu_\bsy u)(\cdot, \bsy) \|_{V,a_\bsy}^2 
.
\end{align}
Moreover, for the second factor in~\eqref{eq:bound-sum-Va-norm},
\begin{align}\label{eq:bound-Va-2}
&\int_D
\sum_{\substack{|\bsnu|=k \\ \nu_j \le \alpha}}
\sum_{j \in \supp(\bsnu)} 
\bsb^{-\bse_j} \nu_j  |\phi_j(\bsx)|   \left| \bsb^{-\bsnu} \nabla (\partial^\bsnu_\bsy u)(\bsx,\bsy)\right|^2
\rd \bsx
\notag
\\
&
\qquad\qquad\qquad
\le
\alpha
\left\|
\sum_{j \ge 1}
\frac{ |\phi_j|/b_j}{a(\cdot,\bsy)}
\right\|_{L^\infty(D)}
\sum_{\substack{|\bsnu|=k \\ \nu_j \le \alpha}}  \bsb^{-2\bsnu} 
\|(\partial^\bsnu_\bsy u)(\cdot, \bsy) \|_{V,a_\bsy}^2
.
\end{align}	
For any $\bsy \in \Omega^\N$ applying~\eqref{eq:a-min} and~\eqref{eq:kappa} we have
\begin{align}\label{eq:sum-to-kappa}
\left\|
\sum_{j \ge 1} 
\frac{  |\phi_j|/b_j}{a(\cdot,\bsy)}
\right\|_{L^\infty(D)}
\le 
\frac{1}{1 - \kappa}
\left\|
\sum_{j\ge 1} 
\frac{  |\phi_j|/b_j}{a_0}
\right\|_{L^\infty(D)}
=
\frac{2\,\kappa}{1 - \kappa}
.
\end{align}
Inserting~\eqref{eq:bound-Va-1},~\eqref{eq:bound-Va-2} and~\eqref{eq:sum-to-kappa} into~\eqref{eq:bound-sum-Va-norm} we have
\begin{align*}
&\sum_{\substack{|\bsnu|=k \\ \nu_j \le \alpha}}
\bsb^{-2\bsnu}
\|(\partial^\bsnu_\bsy u)(\cdot, \bsy) \|_{V,a_\bsy}^2 
\notag
\\
&\qquad\le
\frac{2\,\alpha\,\kappa}{1 - \kappa}
\left(
\sum_{\substack{|\bsnu|=k-1 \\ \nu_j \le \alpha}}  \bsb^{-2\bsnu} 
\|(\partial^\bsnu_\bsy u)(\cdot, \bsy) \|_{V,a_\bsy}^2 
\right)^{1/2}
\left(
\sum_{\substack{|\bsnu|=k \\ \nu_j \le \alpha}}
\bsb^{-2\bsnu}
\|(\partial^\bsnu_\bsy u)(\cdot, \bsy) \|_{V,a_\bsy}^2 
\right)^{1/2}
,
\end{align*}
and therefore
\begin{align*}
\sum_{\substack{|\bsnu|=k \\ \nu_j \le \alpha}}
\bsb^{-2\bsnu}
\|(\partial^\bsnu_\bsy u)(\cdot, \bsy) \|_{V,a_\bsy}^2 
&\le
\left(
\frac{2\,\alpha\,\kappa}{1 - \kappa}
\right)^2
\sum_{\substack{|\bsnu|=k-1 \\ \nu_j \le \alpha}}  \bsb^{-2\bsnu} 
\|(\partial^\bsnu_\bsy u)(\cdot, \bsy) \|_{V,a_\bsy}^2
.
\end{align*}
Using induction on $\bsnu$ we obtain 
\begin{align*}
\sum_{\substack{|\bsnu|=k \\ \nu_j \le \alpha}}
\bsb^{-2\bsnu}
\|(\partial^\bsnu_\bsy u)(\cdot, \bsy) \|_{V,a_\bsy}^2 
\le
\left(
\frac{2\,\alpha\,\kappa}{1 - \kappa}
\right)^{2k}
\|u(\cdot, \bsy) \|_{V,a_\bsy}^2
.
\end{align*}
Applying estimations~\eqref{eq:lb-Va-norm} and~\eqref{eq:ub-Va-norm} then implies
\begin{align*}
\sum_{\substack{|\bsnu|=k \\ \nu_j \le \alpha}}
\bsb^{-2\bsnu}
\|(\partial^\bsnu_\bsy u)(\cdot, \bsy) \|_V^2
\le
\left(
\frac{2\,\alpha\,\kappa}{1 - \kappa}
\right)^{2k}
 \frac{\|f\|_{V^*}^2}{(1 -  \kappa)^2 \, a_{0,\min}^2}
 ,
\end{align*}
which completes the proof.
\end{proof}

\begin{lemma}\label{lem:sum-nu-V-norm}
  For any $\alpha \in \N$, $\setu \subset \N$ and any $\bsy_\setu \in \Omega_\setu$ under the conditions of~\RefProp{prop:sum-k-V-norm} with 
  \begin{align*}
    \kappa
    <
    \frac1{2\alpha+1}
    ,
  \end{align*}
  it holds 
  \begin{align*}
    \sum_{\bsnu_\setu \in \{1:\alpha\}^{|\setu|}}
    \left\|
      \left(\partial^{\bsnu_\setu}_{\bsy_\setu}u(\cdot,\cdot_\setu)\right)(\cdot,\bsy_\setu)
    \right\|^2_V
    &\le
    \frac{C_{\kappa,\alpha}\|f\|_{V^*}^2}{(1 - \kappa)^2 a_{0,\min}^2}
    \prod_{j \in \setu} b_j^2
    ,
  \end{align*}
  where
  \begin{align*}
    C_{\kappa,\alpha}
    :=
    \sum_{k \ge 1} \left( \frac{2\,\alpha\,\kappa}{1 - \kappa} \right)^{2k}
    <
    \infty
    .
  \end{align*}
\end{lemma}
\begin{proof}
Note that $(\partial^{\bsnu_\setu}_{\bsy_\setu} u(\cdot,\cdot_\setu))(\cdot, \bsy_\setu) = (\partial^{\bsnu_\setu}_{\bsy_\setu} u)(\cdot, \bsy_\setu)$ since evaluating in $\bsy_\setu$ is setting all $y_j$ with $j \notin \setu$ to zero and hence it does not matter if we do this before or after taking partial derivatives w.r.t. components $y_j$ with $j \in \setu$.
It follows from~\RefProp{prop:sum-k-V-norm} that
\begin{align*}
  \sum_{\bsnu_\setu \in \{1:\alpha\}^{|\setu|}}
  \left[ \prod_{j \in \setu} b_j^{-2\nu_j} \right]
  \| (\partial^{\bsnu_\setu}_{\bsy_\setu} u(\cdot,\cdot_\setu))(\cdot, \bsy_\setu) \|_V^2
  &=
  \sum_{k \ge 1} \sum_{\substack{|\bsnu|=k \\ \supp(\bsnu) = \setu \\ \nu_j \le \alpha}}
  \bsb^{-2\bsnu} \| (\partial^{\bsnu_\setu}_{\bsy_\setu} u)(\cdot, \bsy_\setu) \|_V^2
  \\
  &\le
  \sum_{k \ge 1} \sum_{\substack{|\bsnu|=k \\ \nu_j \le \alpha}}
  \bsb^{-2\bsnu} \| (\partial^\bsnu_\bsy u)(\cdot, \bsy_\setu) \|_V^2
  \\
  &\le
  \sum_{k \ge 1} 
	\left(
	\frac{2\,\alpha\,\kappa}{1 - \kappa}
	\right)^{2k}
	\frac{\|f\|_{V^*}^2}{(1 -  \kappa)^2 \, a_{0,\min}^2}
  .
\end{align*}
Since $\kappa < \frac1{2\alpha+1}$, or equivalently $\frac{2\,\alpha\,\kappa}{1 - \kappa} < 1$, we have $C_{\kappa,\alpha} < \infty$.
Furthermore, since $0 < b_j \le 1$ for all $j$ we have that $\prod_{j \in \setu} b_j^{-2} \le \prod_{j \in \setu} b_j^{-2\nu_j}$ from which the claim follows.
\end{proof}

We can now show bounds on the norms of $u_\setu$ and $G(u_\setu)$.
Note that all arguments to show the regularity results in this section are all based on the weak formulation of the PDE. Since the weak formulation also holds when $V$ is replaced by $V^h \subset V$ the results hold true when the exact solution $u$ is replaced by its approximated solution $u^h$ with the constants independent of $h$, see, e.g., \cite{GKNSSS2015,KN2016,K2017}.

\begin{lemma}\label{lem:norm-uu-Guu}
  For any $\alpha \in \N$ and any $\setu \subset \N$ under the conditions of~\RefLem{lem:sum-nu-V-norm} it holds
  \ifpreprint %%%%%%%%%%%% WATCH OUT different versions for formatting
  \begin{align*}
    \|u_\setu\|_{H_{\alpha,0,\setu}(\Omega_\setu;V)}
    \le
    \frac{C_{\kappa,\alpha}^{1/2}\ \|f\|_{V^*}}{(1 - \kappa)\, a_{0,\min}}
    \prod_{j \in \setu} b_j
    \quad\text{and}\quad
    \|u_\setu^{h_\setu}\|_{H_{\alpha,0,\setu}(\Omega_\setu;V)}
    \lesssim
    \frac{C_{\kappa,\alpha}^{1/2}\ \|f\|_{V^*}}{(1 - \kappa)\, a_{0,\min}}
    \prod_{j \in \setu} b_j
    .
  \end{align*}
  \else %% JOURNAL version:
    \begin{align*}
    \|u_\setu\|_{H_{\alpha,0,\setu}(\Omega_\setu;V)}
    &\le
    \frac{C_{\kappa,\alpha}^{1/2}\ \|f\|_{V^*}}{(1 - \kappa)\, a_{0,\min}}
    \prod_{j \in \setu} b_j
    \intertext{and}
    \|u_\setu^{h_\setu}\|_{H_{\alpha,0,\setu}(\Omega_\setu;V)}
    &\lesssim
    \frac{C_{\kappa,\alpha}^{1/2}\ \|f\|_{V^*}}{(1 - \kappa)\, a_{0,\min}}
    \prod_{j \in \setu} b_j
    .
  \end{align*}
  \fi
  Furthermore, if $G \in V^*$ then
  \ifpreprint %%%%%%%%%%%% WATCH OUT different versions for formatting
  \begin{align*}
	\|G(u_\setu)\|_{H_{\alpha,0,\setu}}
	\le
	\frac{C_{\kappa,\alpha}^{1/2} \ \|f\|_{V^*}\|G\|_{V^*}}{(1 - \kappa)\, a_{0,\min}}
	\prod_{j \in \setu} b_j
	\quad\text{and}\quad
	\|G(u_\setu^{h_\setu})\|_{H_{\alpha,0,\setu}}
	\lesssim
	\frac{C_{\kappa,\alpha}^{1/2} \ \|f\|_{V^*}\|G\|_{V^*}}{(1 - \kappa)\, a_{0,\min}}
	\prod_{j \in \setu} b_j
	.
  \end{align*}
  \else %% JOURNAL version:
  \begin{align*}
	\|G(u_\setu)\|_{H_{\alpha,0,\setu}}
	&\le
	\frac{C_{\kappa,\alpha}^{1/2} \ \|f\|_{V^*}\|G\|_{V^*}}{(1 - \kappa)\, a_{0,\min}}
	\prod_{j \in \setu} b_j
	\intertext{and}
	\|G(u_\setu^{h_\setu})\|_{H_{\alpha,0,\setu}}
	&\lesssim
	\frac{C_{\kappa,\alpha}^{1/2} \ \|f\|_{V^*}\|G\|_{V^*}}{(1 - \kappa)\, a_{0,\min}}
	\prod_{j \in \setu} b_j
	.
  \end{align*}
  \fi
\end{lemma}
\begin{proof}
  By \RefLem{lem:norm-of-uu-to-u} and the definition~\eqref{eq:Bochner-norm} it is easy to see that 
  \begin{multline*}
    \|u(\cdot, \cdot_\setu)\|_{H_{\alpha,0,\setu}(\Omega_\setu;V)}^2
    \ifpreprint \else \\ \fi
    \le
    \sup_{\bsy_\setu \in \Omega_\setu}
    \sum_{\setv \subseteq \setu}
    \sum_{\bstau_{\setu \setminus \setv} \in \{1:\alpha-1\}^{|\setu \setminus \setv|}} 
    \left\| 
      \left(\partial^{(\bsalpha_\setv,\bstau_{\setu \setminus \setv})}_{\bsy_\setu} u(\cdot,\cdot_\setu)\right)(\cdot,\bsy_\setv,\bszero_{\setu\setminus \setv})
    \right\|_V^2 
    .
  \end{multline*}
  Applying~\RefLem{lem:sum-nu-V-norm} and taking the square root of the obtained inequality gives the first claim.

  Due to the linearity and boundedness of $G$ for any $\bsomega_\setu \in \N^{|\setu|}$ and $\bsy_\setu \in \Omega_\setu$, we have
  \begin{align*}
    \left(\partial_{\bsy_\setu} ^{\bsomega_\setu} G(u_\setu)\right)(\cdot, \bsy_\setu)
    &=
    \left(G\left(\partial_{\bsy_\setu} ^{\bsomega_\setu} u_\setu\right)\right)(\cdot, \bsy_\setu)
    \le 
    \|G\|_{V^*} 
    \left\|
      \left(\partial_{\bsy_\setu} ^{\bsomega_\setu} u_\setu\right)(\cdot, \bsy_\setu)
    \right\|_V
    .
  \end{align*}
  This proves the second claim.
\end{proof}

\section{Finite element discretization}\label{sec:FE-discretization}

In this section we briefly present the finite element method and  its error. The idea of the finite element method is to introduce a finite-dimensional subspace $V^h \subset V$ and solve the variational problem~\eqref{eq:PDE-weak-form} on $V^h$. Specifically, the domain $D$  is partitioned into \emph{elements}, e.g., subintervals, triangles or tetrahedrons with meshwidth $h > 0$ and $V^h$ is a set of polynomials that are defined piecewise on these elements and are globally continuous. The dimension of $V^h$
is of order $h^{-d}$, with $d$ denoting the spatial dimension.
The spaces and norms on the physical domain which we need here were introduced at the end of \RefSec{sec:introduction}.

We consider the case when the domain $D \subset \R^d$ is a convex and bounded polyhedron and
\begin{align*}
  f \in H^{-1+t}(D)
  \quad \text{ and } \quad
  G \in H^{-1+t'}(D)
  ,
\end{align*}
for some real parameters $t \ge 0$ and $t' \ge 0$.
In the case $0 \le t, t' \le 1$ we need the following condition on $a_0$ and $\{\phi_j\}_{j \ge 1}$: 
\begin{align}\label{eq:a-W1}
  a_0 \in W^{1,\infty}(D)
  \quad \text{ and } \quad
  \sum_{j \ge 1}\|\phi_j\|_{W^{1,\infty}(D)} < \infty
  ,
\end{align}
see \cite[Theorems~7.1 and~7.2]{KSS2012}.
In the case $t, t' > 1$, that is, when $f$ and $G$ have extra regularity, we need a stronger assumption.
More specifically, let $W_K^{t_0,\infty}(D)$
denote the \emph{weighted Sobolev space of Kondrat'ev type over $D$} with $t_0 := \max\{t,t'\}$, as defined in~\cite[Equation~(2.3)]{NS2013} and~\cite[Equation~(4.44)]{BCDS2017}. We then require
\begin{align}\label{eq:a-Wt}
  a_0 \in W_K^{t_0,\infty}(D)
  \quad \text{ and } \quad
  \sum_{j \ge 1}\|\phi_j\|_{W^{t_0,\infty}_K(D)} < \infty
  .
\end{align}   
Using higher-order FEMs it is then possible to achieve higher-order error bounds, see, e.g., to~\cite{GHS2018,DKLNS2014,KN2016} and~\cite[Assumption~4.1 and the proof of Lemma~4.1]{NS2013}.
Under these assumptions, for any $\bsy \in \Omega^\N$, we can use the bounds
\begin{align*}
  \left\| u(\cdot,\bsy) - u^h(\cdot,\bsy) \right\|_V
  \le
  C' \, h^t \, \|f\|_{H^{-1+t}(D)} 
\end{align*}
and
\begin{align}\label{eq:FE-error-Gu}
  \left| G(u(\cdot,\bsy)) - G(u^h(\cdot,\bsy)) \right|
  \le
  C \, h^\tau \, \|f\|_{H^{-1+t}(D)} \|G\|_{H^{-1+t'}(D)}
\end{align}
as $h \to 0$ with $\tau := t+t'$ and $C'$ and $C$ are constants independent of $h$ and~$\bsy$.

\section{Error and cost analysis of MDFEM: proof of main result}\label{sec:error-analysis}

In this section we give the main result of this paper which follows in~\RefThm{thm:main-theorem}.
As in \RefSec{sec:MDM} we will split the error, this time in a truncation error, a FE discretization error and a cubature error. In light of \RefThm{thm:QMC-convergence} we formulate this both for deterministic cubature rules and cubature rules which use a random element.

\subsection{Deterministic error bound}

We split the error of the MDFEM into three terms
\begin{multline*}
  I(G(u)) - Q_\epsilon^{\mathrm{MDFEM}}(G(u))
  =
  \left( I(G(u)) - \sum_{\setu \in \setU(\epsilon)} I_\setu(G(u_\setu)) \right)
  \\
  +
  \left( \sum_{\setu \in \setU(\epsilon)} I_\setu\left(G(u_\setu) - G(u_\setu^{h_\setu})\right) \right)
  +
  \left( \sum_{\setu \in \setU(\epsilon)} \left(I_\setu-Q_{\setu,n_\setu}\right)(G(u_\setu^{h_\setu}))\right)
  ,
\end{multline*}
which we will all bound individually.

The truncation error and the cubature error can be bounded in a similar way as in \RefProp{prop:MDM-error}, making use of the fact that $G(u_\setu)$ and $G(u_\setu^{h_\setu}) \in H_{\alpha,0,\setu}$ for any $\alpha \in \N$. The choice of $\alpha$ will be made later in this section and will be determined by the summability of the sequence $\{b_j\}_{j\ge1}$ as given in~\eqref{eq:kappa} and the choice of our weights $\gamma_j$ appearing in the norm~\eqref{eq:infinite-variate-p-norm}.

To bound the FE discretization error we use
\begin{align*}
  \left|
  \sum_{\setu \in \setU(\epsilon)} I_\setu\left(G(u_\setu) - G(u_\setu^{h_\setu})\right)
  \right|
  &\le
  \sum_{\setu \in \setU(\epsilon)} 
  \max_{\bsy_\setu \in \Omega_\setu} \left|G(u_\setu(\cdot, \bsy_\setu))- G(u_\setu^{h_\setu}(\cdot, \bsy_\setu))\right|
  .
\end{align*}
Moreover, using~\eqref{eq:uu}, the linearity of $G$ and~\eqref{eq:FE-error-Gu} we have for any $\bsy_\setu \in \Omega_\setu$ 
\begin{align*}
  \left|G(u_\setu(\cdot, \bsy_\setu))- G(u_\setu^{h_\setu}(\cdot, \bsy_\setu))\right|
  &=
  \left|
  \sum_{\setv \subseteq \setu}
  (-1)^{|\setu|-|\setv|} 
  \left( G(u(\cdot, \bsy_\setv)) - G(u^{h_\setu}(\cdot, \bsy_\setv)) \right)
  \right|
  \\
  &\le 
  \sum_{\setv \subseteq \setu}
  \left| G(u(\cdot, \bsy_\setv)) - G(u^{h_\setu}(\cdot, \bsy_\setv)) \right|
  \\
  &\le
  \sum_{\setv \subseteq \setu}
  C \, h_\setu^\tau \, \|f\|_{H^{-1+t}(D)} \, \|G\|_{H^{-1+t'}(D)}
  \\
  &=
  2^{|\setu|} \,
  C \, h_\setu^\tau \, \|f\|_{H^{-1+t}(D)} \, \|G\|_{H^{-1+t'}(D)}
  .
\end{align*}
Hence, we can bound the FE discretization error as
\begin{align*}
  \left|\sum_{\setu \in \setU(\epsilon)} 
    I_\setu\left(G(u_\setu) - G(u_\setu^{h_\setu})\right)
  \right|
  &\le
  \widetilde{C} \,
  \sum_{\setu \in \setU(\epsilon)} 2^{|\setu|} \, h_\setu^\tau
  ,
\end{align*}
where $\widetilde{C} = \widetilde{C}(f, G) := C \, \|f\|_{H^{-1+t}(D)} \, \|G\|_{H^{-1+t'}(D)}$.

To simplify the analysis we will pick $p=\infty$ for our infinite-variate norm~\eqref{eq:infinite-variate-p-norm}, and hence $q=1$, which means that in the next section we will be able to set $\gamma_j = b_j$ and consider the FE discretization errors and the cubature errors together. Hence we obtain
\ifpreprint %%%%% WATCH OUT different layout for journal an preprint
\begin{multline}\label{eq:MDFEM-deterministic-bound}
  |I(G(u)) - Q_\epsilon^{\mathrm{MDFEM}}(G(u))|
  \le
  \left( \sup_{|\setu| < \infty} \gamma_\setu^{-1} \, \|G(u_\setu)\|_{H_{\alpha,0,\setu}} \right)
  \left( \sum_{\setu \notin \setU(\epsilon)} \gamma_\setu \, M_\setu \right)
  +
  \widetilde{C} \left( \sum_{\setu \in \setU(\epsilon)} 2^{|\setu|} \, h_\setu^\tau \right)
  \\
  +
  \left( \sup_{|\setu| < \infty} \gamma_\setu^{-1} \, \|G(u_\setu^{h_\setu})\|_{H_{\alpha,0,\setu}} \right)
  \max_{\setu \in \setU(\epsilon)} \left( \frac{\ln(n_\setu)}{|\setu|} \right)^{\lambda_1 |\setu|}
  \left( \sum_{\setu \in \setU(\epsilon)} \frac{\gamma_\setu \, C_{\setu,\lambda} \, |\setu|^{\lambda_1 |\setu|}}{n_\setu^\lambda} \right)
  .
\end{multline}
\else %%% JOURNAL version:
\begin{align}\label{eq:MDFEM-deterministic-bound}
  &|I(G(u)) - Q_\epsilon^{\mathrm{MDFEM}}(G(u))|
  \\\notag
  &\le
  \left( \sup_{|\setu| < \infty} \gamma_\setu^{-1} \, \|G(u_\setu)\|_{H_{\alpha,0,\setu}} \right)
  \left( \sum_{\setu \notin \setU(\epsilon)} \gamma_\setu \, M_\setu \right)
  +
  \widetilde{C} \left( \sum_{\setu \in \setU(\epsilon)} 2^{|\setu|} \, h_\setu^\tau \right)
  \\\notag
  &\;\;+
  \left( \sup_{|\setu| < \infty} \gamma_\setu^{-1} \, \|G(u_\setu^{h_\setu})\|_{H_{\alpha,0,\setu}} \right)
  \max_{\setu \in \setU(\epsilon)} \left( \frac{\ln(n_\setu)}{|\setu|} \right)^{\lambda_1 |\setu|}
  \left( \sum_{\setu \in \setU(\epsilon)} \frac{\gamma_\setu \, C_{\setu,\lambda} \, |\setu|^{\lambda_1 |\setu|}}{n_\setu^\lambda} \right)
  .
\end{align}
\fi

\subsection{Randomized error bound}

For cubature methods which include a random element we will bound the root-mean-square error over the random choices.
We require these cubature rules to be unbiased. This is true for the randomly digitally shifted polynomial lattice rules with error bound~\eqref{eq:RPLR}.
For the MDFEM we will analyse the case when the random elements $\bsDelta_\setu$ for each $Q_{\setu,n_\setu}^{\bsDelta_\setu}$ are independent of each other and we write the product expectation over all these independent random elements as~$\E_\bsDelta$.
Under these conditions the root-mean-square error of the MDFEM can be bounded as
\begin{align*}
  &\E_\bsDelta\left[\left| I(G(u)) - Q_\epsilon^{\mathrm{MDFEM}}(G(u)) \right|^2\right]
  \\
  &\ifpreprint \qquad \fi=
  \E_\bsDelta\left[\left|
    \left( I(G(u)) - \sum_{\setu \in \setU(\epsilon)} I_\setu(G(u_\setu^{h_\setu})) \right)
    + 
    \left( \sum_{\setu \in \setU(\epsilon)} \left(I_\setu-Q^{\bsDelta_\setu}_{\setu,n_\setu}\right)(G(u_\setu^{h_\setu})) \right)
  \right|^2\right]
  \\
  &\ifpreprint \qquad \fi=
  \left| I(G(u)) - \sum_{\setu \in \setU(\epsilon)} I_\setu(G(u_\setu^{h_\setu})) \right|^2
  +
  \E_\bsDelta\left[ 
  \left| \sum_{\setu \in \setU(\epsilon)} \left(I_\setu-Q^{\bsDelta_\setu}_{\setu,n_\setu}\right)(G(u_\setu^{h_\setu})) \right|^2
  \right]
  \\
  &\ifpreprint \qquad \fi=
  \left| I(G(u)) - \sum_{\setu \in \setU(\epsilon)} I_\setu(G(u_\setu^{h_\setu})) \right|^2
  +
  \sum_{\setu \in \setU(\epsilon)} \E_{\bsDelta_\setu}\left[ \left| \left(I_\setu-Q^{\bsDelta_\setu}_{\setu,n_\setu}\right)(G(u_\setu^{h_\setu})) \right|^2
  \right]
  .
\end{align*}
Note that the first part can be split up in the sum of the truncation and FE discretization error, while for the second part we expect the root-mean-square error to behave like~\eqref{eq:Qu-rate}.
Using the same arguments as in the previous section and picking $p=\infty$ and $q=1$ we then receive
\begin{align}\label{eq:MDFEM-randomized-bound}
  \notag
  &\E_\bsDelta\left[\left|I(G(u)) - Q_\epsilon^{\mathrm{MDFEM}}(G(u)) \right|^2\right]
  \\
  &\ifpreprint \qquad \fi\le
  \left(
  \left( \sup_{|\setu| < \infty} \gamma_\setu^{-1} \, \|G(u_\setu)\|_{H_{\alpha,0,\setu}} \right)
  \left( \sum_{\setu \notin \setU(\epsilon)} \gamma_\setu \, M_\setu \right)
  +
  \widetilde{C} \left( \sum_{\setu \in \setU(\epsilon)} 2^{|\setu|} \, h_\setu^\tau \right)
  \right)^2
  \\
  \notag
  &\ifpreprint \qquad \quad \else \;\; \fi+
  \left(
  \left( \sup_{|\setu| < \infty} \gamma_\setu^{-1} \, \|G(u_\setu^{h_\setu})\|_{H_{\alpha,0,\setu}} \right)
  \max_{\setu \in \setU(\epsilon)} \left( \frac{\ln(n_\setu)}{|\setu|} \right)^{\lambda_1 |\setu|}
  \left( \sum_{\setu \in \setU(\epsilon)} \frac{\gamma_\setu \, C_{\setu,\lambda} \, |\setu|^{\lambda_1 |\setu|}}{n_\setu^\lambda} \right)
  \right)^2
  .
\end{align}
Using $a^2 + b^2 \le (a + b)^2$ for $a, b \ge 0$ and taking the square root on both sides we obtain exactly the same expression for the root-mean-square error as in the right hand side of~\eqref{eq:MDFEM-deterministic-bound}.

\subsection{Choosing the weight parameters $\gamma_\setu$}

For both the deterministic and the randomized error bound, see \eqref{eq:MDFEM-deterministic-bound} and~\eqref{eq:MDFEM-randomized-bound}, we need to choose $\{ \gamma_j\}_{j \ge 1}$ such that both
\begin{align*}
  \sup_{|\setu| < \infty} \gamma_\setu^{-1} \, \|G(u_\setu)\|_{H_{\alpha,0,\setu}}
  &<
  \infty
  &\text{and}&&
  \sup_{|\setu| < \infty} \gamma_\setu^{-1} \, \|G(u_\setu^{h_\setu})\|_{H_{\alpha,0,\setu}}
  &<
  \infty
  .
\end{align*}
Applying~\RefLem{lem:norm-uu-Guu} we have
\begin{align*}
  \sup_{|\setu| < \infty} \gamma_\setu^{-1} \, \|G(u_\setu)\|_{H_{\alpha,0,\setu}}
  &\le
  \frac{C_{\kappa,\alpha}^{1/2} \, \|f\|_{V^*} \, \|G\|_{V^*}}{(1 - \kappa) \, a_{0,\min}}
\sup_{|\setu| < \infty} \gamma_\setu^{-1} \prod_{j \in \setu} b_j
  ,
\end{align*}
which is finite if we choose
\begin{align}\label{eq:product-weights}
  \gamma_j
  =
  b_j
  .
\end{align}
By \RefLem{lem:norm-uu-Guu} the same holds for $\|G(u_\setu^{h_\setu})\|_{H_{\alpha,0,\setu}}$.
Obviously this means $\{\gamma_j\}_{j\ge1} \in \ell^{p^*}(\N)$ since $\{b_j\}_{j\ge1} \in \ell^{p^*}(\N)$.

\begin{remark}\label{rem:Gu-well-defined}
With the choice of weights~\eqref{eq:product-weights} and under the conditions~\eqref{eq:a0-max-min},~\eqref{eq:kappa} and~\eqref{eq:bj-pstar-summable} the decomposition~\eqref{eq:G-MDM},
\begin{align*}
  G(u(\bsx, \bsy))
  =
  \sum_{|\setu| < \infty} G(u_\setu(\bsx, \bsy_\setu))
\end{align*}
is well-defined, i.e., for any $\bsy \in \Omega^\N$ and any $\bsx \in D$
\begin{align*}
  \left|\sum_{|\setu| < \infty} G(u_\setu(\bsx, \bsy_\setu))\right|
  <
  \infty
  .
\end{align*}
Indeed, using the reproducing property of $K_{\alpha,0,\setu}$ and the Cauchy--Schwarz inequality we have
\begin{align*}
  \left| \sum_{|\setu| < \infty} G(u_\setu(\bsx, \bsy_\setu)) \right|
  &=
  \left| \sum_{|\setu| < \infty} \langle G(u_\setu(\bsx, \cdot)), K_{\alpha,0,\setu}(\bsy_\setu, \cdot) \rangle_{H_{\alpha,0,\setu}}\right| 
  \\
  &\le
  \sum_{|\setu| < \infty} 
  \| G(u_\setu)\|_{H_{\alpha,0,\setu}} \, \|K_{\alpha,0,\setu}(\bsy_\setu, \cdot)\|_{H_{\alpha,0,\setu}}
  \\
  &=
  \sum_{|\setu| < \infty}
	\| G(u_\setu)\|_{H_{\alpha,0,\setu}} \, (K_{\alpha,0,\setu}(\bsy_\setu, \bsy_\setu))^{1/2}
  \\
  &\le
  \left( \sup_{|\setu| < \infty} \gamma_\setu^{-1} \, \|G(u_\setu)\|_{H_{\alpha,0,\setu}} \right)
  \left( \sum_{|\setu| < \infty} \gamma_\setu \, (K_{\alpha,0,\setu}(\bsy_\setu, \bsy_\setu))^{1/2} \right)
  .
  \end{align*}
The first term is finite due to the way we choose $\gamma_j$ as in~\eqref{eq:product-weights}.
For the second term we have
\begin{align*}
  \sum_{|\setu| < \infty} \gamma_\setu \, (K_{\alpha,0,\setu}(\bsy_\setu, \bsy_\setu))^{1/2}
  &\le
  \sum_{|\setu| < \infty} \gamma_\setu \, \max_{\bsy_\setu \in \Omega_\setu} |K_{\alpha,0,\setu}(\bsy_\setu, \bsy_\setu)|^{1/2}
  \le
  \sum_{|\setu| < \infty} \gamma_\setu \, M_\setu
   ,
\end{align*}
where $M_\setu = M^{|\setu|}$ and with $M$ given as in~\eqref{eq:H-alpha-0:M}.
Applying \RefLem{lem:sum-finite} we have $\sum_{|\setu| < \infty} \gamma_\setu \, M_\setu < \infty$, which implies the needed claim.
Note that we need to demand $p^* \le 1$ to apply \RefLem{lem:sum-finite} here.
\end{remark}

\subsection{Computational cost}\label{sec:computational-cost}

Now we study the computational cost of the proposed method~\eqref{eq:MDFEM}.
To obtain $u_\setu^{h_\setu}$, see~\eqref{eq:uu-hu}, we have to calculate $\setv$-truncated solutions, cf.~\eqref{eq:u-truncated-solution}, for each $\setv \subseteq \setu$, i.e., $2^{|\setu|}$ solutions of the PDE with meshwidth $h_\setu$.
Hence, for each node $\bsy_\setu^{(k)}$ of the $n_\setu$-point cubature method and for each $\setv \subseteq \setu$ the FEM leads to solve a system of linear equations.
Due to the locality of the polynomials of $V^{h_\setu}$ the matrix
is sparse and has $O(h_\setu^{-d})$ nonzero elements, where $d$ is the physical dimension, e.g., $d=1,2,3$.
We assume the cost of solving the sparse linear system is nearly linear, i.e., of order $O(h_\setu^{-d'})$ with $d' \sim d$, e.g., in~\cite{KSS2012,KNPSW2017,DKLNS2014} this cost was assumed to be linear with $d' = d$.
To evaluate each element of the matrix we assume that it is dominated by the cost to evaluate $a(\bsx,\bsy_\setv)$ and we bound this by~$O(|\setv|)$.
Thus, the cost for evaluating the stiffness matrix for the $\setv$-truncated solution is~$O(h_\setu^{-d} \, |\setv|)$, which we can estimate as $O(h_\setu^{-d} \, |\setu|)$ for every $\setv \subseteq \setu$.

As a result, the total computational cost of the MDFEM is given by
\begin{align}\label{eq:MDFEM:cost}
  \cost(Q_\epsilon^{\mathrm{MDFEM}})
  =
  O\!\left( \sum_{\setu \in \setU(\epsilon)} n_\setu \, h_\setu^{-d'} \, \pounds_\setu \right)
  ,
\end{align}
with $\pounds_\setu = 2^{|\setu|} |\setu|$ and $d' \sim d$.

Similar to \RefSec{sec:MDM}, the key idea of the MDFEM is to first select the active set $\setU(\epsilon)$ such that the truncation error is bounded by $\epsilon/2$ and then for every $\setu \in \setU(\epsilon)$ choose $h_\setu$ and the cubature rules $Q_{\setu,n_\setu}$  such that the computational cost~\eqref{eq:MDFEM:cost} is minimized with respect to the combination of the cubature error and the FE discretization error being bounded by $\epsilon/2$.
Since our spaces $H_{\alpha,0,\setu}$ are unweighted, the cubature rules can be reused and their construction (e.g., by constructing good generating vectors) can be considered as an a priori cost and therefore we do not include it in the total cost.

\subsection{Selection of the MDFEM active set}

Since we have now chosen $p=\infty$, $q=1$ and $\gamma_j = b_j$, we use \RefRem{rem:choice-of-rho} to minimize the size of the active set~\eqref{eq:active-set} for the MDFEM by using
\begin{align}\label{eq:MDFEM:active-set}
  \setU(\epsilon, 1, 1/p^*)
  =
  \left\{
    \setu
    :
    \left(\gamma_\setu \, M_\setu\right)^{(1-p^*)}
    >
    \frac{\epsilon/2}{\sum_{|\setv| < \infty} (\gamma_\setv \, M_\setv)^{p^*}}
  \right\}
  .
\end{align}

\subsection{Selection of the finite element and cubature approximations}

Similar to the optimization problem in \RefSec{sec:MDM:Lagrange}, we look for positive real numbers $k_\setu$ and $h_\setu$, and then set
\begin{align}\label{eq:MDFEM:nu}
  n_\setu
  &=
  2^{\log_2(\lfloor k_\setu \rfloor)} \in \N_0
  ,
\end{align}
such that $k_\setu$ and $h_\setu$ solve the following optimization problem:
\begin{align}\label{eq:MDFEM:optim}\begin{split}
  &\text{minimize } \sum_{\setu \in \setU(\epsilon,1,1/p^*)} k_\setu \, h_\setu^{-d'} \, \pounds_\setu
  \\
  &\text{subject to } \sum_{\setu \in \setU(\epsilon,1,1/p^*)} \left( \frac{\gamma_\setu \, 2^\lambda \, C_{\setu,\lambda} \, |\setu|^{\lambda_1|\setu|}}{k_\setu^\lambda} + 2^{|\setu|} \, h_\setu^\tau \right)
  =
  \frac{\epsilon}{2}
  ,
\end{split}\end{align}
with $\pounds_\setu = 2^{|\setu|} \, |\setu|$.
This can be solved using the Lagrange multiplier method.
We refer to \cite{NN2021} where this was worked out in the context of the MDFEM with a lognormal random field.
We obtain
\begin{align}
  \label{eq:MDFEM:ku}
  k_\setu
  &=
  \left(\frac{\epsilon}{2}\right)^{-1/\lambda}
  \left(\frac{\tau + \lambda \, d'}{\tau}\right)^{1/\lambda}
  \left(
    \frac{\gamma_\setu \, 2^\lambda \, C_{\setu,\lambda} \, |\setu|^{\lambda_1|\setu|}}{2^{|\setu| d'} \, \pounds_\setu^{\tau}}
  \right)^{1/(\tau + \lambda (\tau + d'))}
  K_\epsilon^{1/\lambda}
\intertext{and}
  \label{eq:MDFEM:hu}
  h_\setu
  &=
  \left(\frac{\epsilon}{2}\right)^{1/\tau}
  \left(\frac{\lambda \, d' \, 2^{-|\setu|}}{\tau + \lambda \, d'}\right)^{1/\tau}
  \left(\frac{\gamma_\setu \, 2^\lambda \, C_{\setu,\lambda} \, |\setu|^{\lambda_1 |\setu|} \, \pounds_\setu^\lambda}{2^{|\setu|(\lambda+1)}}\right)^{1/(\tau + \lambda (\tau+d'))}
  K_\epsilon^{-1/\tau}
\intertext{with}
  \notag
  K_\epsilon
  &:=
  \sum_{\setu \in \setU(\epsilon,1,q/p^*)} \left( \gamma_\setu^\tau \, 2^{\lambda\tau} \, C_{\setu_,\lambda}^\tau \, |\setu|^{\lambda_1 \tau |\setu|} \, 2^{\lambda d' |\setu|} \, \pounds_\setu^{\lambda \tau} \right)^{1/(\tau+\lambda(\tau+d'))}
  .
\end{align}
Again, making use of \RefLem{lem:sum-finite} to have $\lim_{\epsilon \to 0} K_\epsilon$ absolutely bounded we obtain the conditions $\lambda_1 < 1 + \lambda (1 + d'/\tau)$ and $\lambda \le (1 - p^*) / (p^* ( 1 + d'/\tau))$.
The same conditions also make the cost uniformly bounded and we can write
\begin{align*}
  \cost(Q_\epsilon^{\mathrm{MDFEM}})
  \lesssim
  \epsilon^{-1/\lambda - d'/\tau}
  .
\end{align*}
It is easy to see that bigger values of $\lambda$ give lower bounds for the computational cost, so in~\RefThm{thm:main-theorem} we will choose $\lambda$ as big as possible, i.e., 
$\lambda = (1-p^*) / (p^*(1+d'/\tau))$.

\subsection{Main result}

Finally combining the selection of the active set, the cubature rules and the finite element discretizations we obtain our main result.

\begin{theorem}\label{thm:main-theorem}
	Let $a_0 \in L^\infty(D)$ be such that $\essinf a_0 >0$, and assume there exists a sequence $\{b_j\}_{j\ge 1} \in \ell^{p^*}(\N)$ with $0< b_j \le 1$ for all $j$ and some $p^* \in (0,1)$, such that
	\begin{align*}
	  \kappa
	  =
	  \left\| \frac{\sum_{j \ge 1} |\phi_j|/b_j}{2 a_0} \right\|_{L^\infty(D)}
	  <
	  1
	  .
	\end{align*}
    Assume the used FEM converges with a rate $\tau$ as in~\eqref{eq:FE-error-Gu}, with the particular conditions~\eqref{eq:a-W1} or~\eqref{eq:a-Wt}, and solving the linear systems costs $O(h_\setu^{d'})$.
	Let, for a given requested error tolerance $\epsilon > 0$, the active set $\setU(\epsilon, 1, 1/p^*)$ be chosen as in~\eqref{eq:MDFEM:active-set},
	the number of cubature points $n_\setu$
	be chosen as in~\eqref{eq:MDFEM:nu}, with $k_\setu$ given by~\eqref{eq:MDFEM:ku},
	and the meshwidths $h_\setu$ 
	be chosen as in~\eqref{eq:MDFEM:hu}, i.e., as the solution to the optimization problem~\eqref{eq:MDFEM:optim}.
	Then with $\alpha = \left\lfloor\frac{\tau(1-p^*)}{p^*(\tau+d')}\right\rfloor+1$ and for $\kappa < \frac{1}{2\alpha+1}$ the following hold.
	\begin{enumerate}
		\item If $\frac{\tau(1-p^*)}{p^*(\tau+d')} \ge 1$ then the MDFEM based on interlaced polynomial lattice rules with interlacing factor $\alpha$
		and convergence as in~\eqref{eq:IPLR} with $\lambda = \frac{\tau(1-p^*)}{p^*(\tau+d')}$ achieves
		\begin{align*}
		  \left|I(G(u)) - Q_\epsilon^{\mathrm{MDFEM}}(G(u)) \right|
		  \lesssim
		  \epsilon
		  .
		\end{align*}
		\item If $\frac1{2} \le \frac{\tau(1-p^*)}{p^*(\tau+d')} < 1$ then the MDFEM  based on randomly digitally shifted polynomial lattice rules with convergence as in~\eqref{eq:RPLR} with $\lambda = \frac{\tau(1-p^*)}{p^*(\tau+d')}$ achieves
		\begin{align*}
		  \sqrt{\E_\bsDelta\left[\left|I(G(u)) - Q_\epsilon^{\mathrm{MDFEM}}(G(u))\right|^2\right]}
		  \lesssim
		  \epsilon
		  .
		\end{align*}
	\end{enumerate}
	In both cases the computational cost is bounded as
	\begin{align*}
	  \cost(Q_\epsilon^{\mathrm{MDFEM}})
	  &\lesssim
	  \epsilon^{-a_{\mathrm{MDFEM}}}
	  &\text{with}&&
	  a_{\mathrm{MDFEM}}
	  :=
	  \frac{1}{\lambda} + \frac{d'}{\tau}
	  =
	  \frac{1+d'/\tau}{1/p^*-1} + \frac{d'}{\tau}
	  .
	\end{align*}
\end{theorem}

We now compare the MDFEM presented in this paper with the single-level quasi-Monte Carlo finite element method (SLQMCFEM) developed in \cite{GHS2018} which is a truncation algorithm for the parameters $y_j$ to some dimension~$s$.
The SLQMCFEM achieves an error, see~\cite[Equation~(38)]{GHS2018},
\begin{align*}
  \error(Q^{\mathrm{SLQMCFEM}})
  \lesssim
  n^{-1/p^*} + h^\tau+ \left(\sup_{j \ge s+1} \{b_j\}\right)^2
  ,
\end{align*}
where $n$ is the number of cubature points, $h$ is the finite element meshwidth and $s$ is the truncation dimension.
Assume a similar computational cost setting as in~\RefSec{sec:computational-cost}, i.e.,
\begin{align*}
  \cost(Q^{\mathrm{SLQMCFEM}})
  \lesssim
  n \, h^{-d'} \, s
  .
\end{align*}
To achieve an error of order $O(\epsilon)$ the computational cost of the SLQMCFEM is of order $O(\epsilon^{-a_{\mathrm{SL}}})$ with $a_{\mathrm{SL}} := d'/\tau + 3p^*/2$.
Hence, we have
\begin{align*}
  a_{\mathrm{SL}} - a_{\mathrm{MDFEM}}
  =
  p^*\left(\frac{3}{2} - \frac1{1-p^*} - \frac{d'}{\tau(1-p^*)}\right)
\end{align*}
which is positive when $d'/\tau+ 3p^*/2 < 1/2$. This means that the MDFEM outperforms the SLQMCFEM when $p^* < 1/3 - 2d'/(3\tau)$, i.e., when the terms in the expansion of the diffusion coefficient decay sufficiently fast. 

We note that the cost model in~\cite{GHS2018} takes advantage of the wavelet decomposition to obtain a discretization of the random field, but also in that case the MDFEM can outperform the SLQMCFEM when $p^*$ is small enough. It is likely that also the MDFEM can take advantage of the wavelet decomposition, but it is not immediately clear how to incorporate this into the cost analysis.

\section{Conclusion and further work}\label{sec:conclusion}

In this work we have proposed the MDFEM which is an algorithm combining the MDM with the FEM and have applied it to elliptic PDEs with uniform random diffusion coefficients. We have analyzed the error and the computational cost of the proposed method. It has been theoretically shown that our method is competitive with SLQMCFEM in term of error versus computational cost.

We give some further remarks on implementing the MDFEM.
Once the active set of the MDFEM is selected, the different parts of the decomposed form can be computed in parallel.
Moreover, because of the recursive structure of the anchored decomposition there is a chance to save computational cost by reducing the number of repeated function evaluations.
Such a method has been analyzed in~\cite{GKNW2018}.

The general MDM is shown to be efficient for infinite-dimensional integrals with respect to general probability measures, and it is capable of retrieving a convergence rate very close to that of the used cubature rules for the finite-dimensional integrals. The analysis in this paper for the MDFEM is restricted to uniform diffusion coefficients, i.e., to integrals with respect to uniform distributions.
This analysis has been extended to log-normal diffusion coefficients, that is, when $a(\bsx,\bsy) = \exp(Z(\bsx,\bsy))$ where $Z$ is a Gaussian random field in~\cite{NN2021}.

\bibliographystyle{plain}
\bibliography{MDM}

\end{document}